\newcommand{\kibitz}[2]{\ifnum\Comments=1\textcolor{#1}{#2}\fi}
\newtheoremstyle{normal}
{2ex}               
{3ex}               
{}                  
{}                  
{\bfseries} 
{}                  
{2pt}   
{\thmname{#1}\thmnumber{ #2.} \thmnote{(#3)}}
\newtheoremstyle{italic}
{2ex}
{3ex}
{\itshape}
{}
{\bfseries} 
{}
{2pt}
{\thmname{#1}\thmnumber{ #2.} \thmnote{(#3)}}
\theoremstyle{normal}
\newtheorem{definition}{Definition}[section]
\newtheorem{remark}[definition]{Remark}
\newtheorem{condition}[definition]{Condition}
\theoremstyle{italic}
\newtheorem{theorem}[definition]{Theorem}
\newtheorem{lemma}[definition]{Lemma}
\newtheorem{proposition}[definition]{Proposition}
\newcommand\N{\mathbb{N}}
\newcommand\Q{\mathbb{Q}}
\newcommand\R{\mathbb{R}}
\newcommand\Prob{\mathbb{P}}    
\newcommand\Bb{\mathbb{B}}
\newcommand\Eb{\mathbb{E}}
\newcommand\Gb{\mathbb{G}}
\newcommand\ovw{\overline{w}}
\newcommand\ovr{\overline{r}}
\newcommand\ovv{\overline{v}}
\newcommand\weak{\rightsquigarrow}
\newcommand\defeq{:=}
\newcommand\Deli{\Delta_i^n}
\newcommand\Delj{\Delta_j^n}
\newcommand\Indit{\mathtt 1_{(- \infty, t ]}}
\newcommand\Truniv{\mathtt 1_{\lbrace | \Delta_i^n X | > v_n \rbrace}}
\newcommand\Trunx{\mathtt 1_{\lbrace |x| > v_n \rbrace}}
\newcommand\linfr{\ell^{\infty}(\R)}
\newcommand\tibigj{\tilde X^{\prime \prime n}}
\newcommand\tibigjal{\tilde X^{\prime \prime}(\alpha)^n}
\newcommand\tibigjeial{\tilde X^{\prime \prime}(8 \alpha)^n}
\newcommand\hatbigjal{\hat X^{\prime \prime}(\alpha)}
\begin{document}

\title{Weak convergence of the empirical truncated distribution function of the L\'evy measure of an It\=o semimartingale}

\author{Michael Hoffmann\footnotemark[1]\  ~and  Mathias Vetter\footnotemark[2], \bigskip \\
{ Ruhr-Universit\"at Bochum \ \& \ Christian-Albrechts-Universit\"at zu Kiel}
}

\footnotetext[1]{Ruhr-Universit\"at Bochum,
Fakult\"at f\"ur Mathematik, 44780 Bochum, Germany.
{E-mail:} michael.hoffmann@rub.de}

\footnotetext[2]{Christian-Albrechts-Universit\"at zu Kiel, Mathematisches Seminar, Ludewig-Meyn-Str.\ 4, 24118 Kiel, Germany.
{E-mail:} vetter@math.uni-kiel.de}

\maketitle

\begin{abstract}
Given an It\=o semimartingale with a time-homogeneous jump part observed at high frequency, we prove weak convergence of a normalized truncated empirical distribution function of the L\'evy measure to a Gaussian process. In contrast to competing procedures, our estimator works for processes with a non-vanishing diffusion component and under simple assumptions on the jump process.  
\end{abstract}


 \textit{Keywords and Phrases:}  Empirical distribution function; high-frequency statistics; It\=o semimartingale; L\'evy measure; weak convergence

\smallskip

 \textit{AMS Subject Classification:} 60F17, 60G51 (primary); 62G99, 62M99 (secondary)


\section{Introduction}
\def\theequation{1.\arabic{equation}}
\setcounter{equation}{0}

Recent years have seen a growing interest in statistical methods for time-continuous processes exhibiting jumps, in particular for L\'evy processes and related models, as these processes possess a rather simple mathematical form but allow for a flexible modelling of various real-life phenomena. In the framework of discrete observations of such processes, two different strands of literature have been developed, depending on the nature of the sampling scheme and its asymptotics. Let $n$ denote the number of observations and $\Delta_n > 0$ the distance between two successive observations of the underlying process $X$. Besides the natural assumption $n \Delta_n \to \infty$ of a growing time horizon, which in general cannot be avoided due to the fact that only finitely many large jumps exist over any finite interval, one has to distinguish between low-frequency observations with $\Delta = \Delta_n$ being fixed and high-frequency observations with $\Delta_n \to 0$ as well. 

Usually, the statistical methods are highly different in both contexts, and it is well-known that not all characteristics of a L\'evy process, say, can be recovered in both situations. In the low-frequency situation, the focus is typically on methods from the frequency domain and involves estimation of the characteristic exponent of $X$ in order to identify the quantities of interest. See e.g.\ \cite{NeuRei09}, \cite{Gug12} or \cite{NicRei12}. On the other hand, for high-frequency observations one mostly remains in the time domain and identifies e.g.\ jumps of $X$ from large increments over small intervals. With a view on the L\'evy measure, this approach has been used for instance in \cite{FigLop08} or \cite{Hof14}. 

Most recently, \cite{Nic15} presented several approaches to estimate
\[
N_{\rho}(t) = \int \limits_{-\infty}^t \rho(x) \nu(dx),
\]
where $\nu$ denotes the L\'evy measure and $\rho$ is chosen appropriately such that the integral is always defined. Under weak conditions on $\rho$, this L\'evy distribution function determines the entire jump behaviour of $X$, just like probability measures being determined by standard distribution functions. Among other estimators, including a spectral estimator in the spirit of \cite{NicRei12}, the authors discuss properties of the natural estimator from the high-frequency framework, which counts increments of $X$ below the threshold $t$ and weights them according to $\rho$. Precisely, they use
\[
N_\rho^{(n)}(t) = \frac{1}{n \Delta_n} \sum \limits_{i = 1}^n \rho(\Deli X) \Indit(\Deli X), 
\]
where $\Deli X \defeq X_{i \Delta_n} - X_{(i-1) \Delta_n}$ denotes the increment of $X$ over $[(i-1)\Delta_n, i\Delta_n]$. The authors establish weak convergence of $\sqrt{n \Delta_n} \big(N_\rho^{(n)}(t) - N_\rho(t)\big)$ to a Gaussian process, but only for L\'evy processes without a diffusion component and under additional conditions on the L\'evy measure of which some are difficult to check. 

Given the need to assess the unknown L\'evy measure for various applications like model validation or to identify changes in the temporal behaviour, it is unsatisfactory that estimators in the time domain only work when no Brownian component is present. For this reason, we propose a natural extension using a truncation technique which allows for limit theorems involving diffusion components as well, that is 
\[
\overline N_{\rho}^{(n)}(t) = \frac{1}{n \Delta_n} \sum \limits_{i=1}^n \rho(\Deli X) \Indit(\Deli X) \Truniv
\]
for a suitable sequence $v_n$. Truncation methods in the high-frequency regime date back to \cite{mancini} and have usually been used to cut off jumps in order to focus on continuous movements of the process only. Here, we use truncation to identify jumps, which is crucial to estimate the L\'evy distribution function around zero correctly. Moreover, we allow the continuous part of $X$ to be a general It\=o semimartingale, and our conditions on the jump measure are sufficiently general to accommodate a variety of well-known jump processes from the literature as well. 

In the following, $X$ denotes an It\=o semimartingale with characteristics $(b_s, \sigma_s, \overline \mu)$, that is a stochastic process with the decomposition
\begin{multline}
\label{ItoSemimart}
X_t = X_0 + \int \limits_0^t b_s ds + \int \limits_0^t \sigma_s dW_s + \int \limits_0^t \int \limits_{\R} x \mathtt 1_{\lbrace |x| \leq 1 \rbrace} (\mu - \overline \mu)(ds,dx) \\
+ \int \limits_0^t \int \limits_{\R} x \mathtt 1_{\lbrace |x| >1 \rbrace} \mu(ds,dx).
\end{multline}
Here, $b_s$ and $\sigma_s$ are predictable processes from $\Omega \times \R_+$ to $\R$, $W$ denotes a standard Brownian motion and $\mu$ is the random measure associated with the jumps of $X$. We assume that the jump behaviour of $X$ is constant over time, in which case the compensator $\overline \mu$ of $\mu$ is of the form $\overline \mu(ds,dx) = ds \nu(dx)$, where the L\'evy measure $\nu(dx)$ integrates the function $1 \wedge x^2$ and does not charge $0$. Observations come regularly in a high-frequency regime, i.e.\ at stage $n \in \N$ we observe realizations of $X$ at the equidistant time points $i \Delta_n$ with $i = 0, \ldots ,n$, where the mesh $\Delta_n \rightarrow 0$, while $n \Delta_n \rightarrow \infty$. 

The remainder of the paper is organised as follows: Section \ref{sec:cond} deals with the conditions on the process and the auxiliary sequences, which we need in order for weak convergence of $\sqrt{n \Delta_n} \big(\overline N_{\rho}^{(n)}(t) - N_\rho(t)\big)$ to hold. These assumptions are rather mild and satisfied by a number of standard models. Section \ref{sec:converge} contains the main theorems of this work, as well as a short overview on the strategy we use in order to establish these results. All proofs are gathered in an Appendix which is Section \ref{sec:app}.

\section{Conditions on the underlying process and the estimator} \label{sec:cond}
\def\theequation{2.\arabic{equation}}
\setcounter{equation}{0}

Suppose we have complete knowledge of the distribution function $N_{\rho}(t)$ for a function $\rho$ which satisfies $0 \leq \rho(x) \leq K(1 \wedge x^2)$ for some constant $K>0$ and $\rho(x) >0$ for $x \neq 0$. Obviously, the measure with density $M(dx) \defeq \rho(x) \nu(dx)$ is completely determined from knowledge of the entire function $N_{\rho}$ and does not charge zero. Therefore, $1/\rho(x) M(dx) = \nu(dx)$ and consequently the jump behaviour of the It\=o semimartingale is known as well. For all possible applications it is thus sufficient to draw inference on $N_{\rho}$ only.  

Throughout this work we assume that $X$ is defined on a filtered probability space $(\Omega, \mathcal F, \Prob, (\mathcal F_t)_{t \in \R_+})$ and has a representation as in \eqref{ItoSemimart}. Recall further that at stage $n \in \N$ we observe realisations of $X$ at the equidistant time points $i \Delta_n$ with $0 \leq i \leq n$. In order to establish weak convergence of the estimator $\overline N^{(n)}_{\rho}(t)$, we state some further conditions on the underlying process and the auxiliary variables. 

\begin{condition}
\label{EasierCond}
Let $0< \beta < 2$ and $0< \zeta < \tau < 1/16$. Furthermore define $p \defeq 8(1+3 \beta) \frac{1+ \tau}{1-16 \tau}$. \\

\begin{compactenum}[(a)]
\item \textit{Conditions on the L\'evy measure and the function $\rho$:} \\
      The L\'evy measure has a Lebesgue density $h$ which satisfies:
			\begin{enumerate}[(1)]
			\item $h(x) \leq K|x|^{-(1+ \beta)}$ for $x$ in a neighbourhood of $0$ and some $K>0$.
			\item $h(x)$ is bounded on each $C_n \defeq \lbrace x \colon \frac{1}{n} \leq |x| \leq n \rbrace$ with $n \in \N$.
			      \label{DiCondmiddle}
			\item There is an $M>0$ such that $h(x) \leq K |x|^{-p- \epsilon}$ for some $K > 0$, when $|x| \geq M$ with some $\epsilon >0$.
						\label{DiCondinfty}
      \item $\rho \colon \R \rightarrow \R$ is a bounded $\mathcal C^1$-function with $\rho(0)=0$ and its derivative satisfies 
			      $|\rho^{\prime}(x)| \leq K |x|^{p-1}$ for all $x \in \R$ and some constant $K>0$.
			\end{enumerate}
\item \textit{Conditions on the truncation sequence $v_n$ and the observation scheme:} \\
     The truncation sequence $v_n$ satisfies
		 $$v_n \defeq \gamma \Delta_n^{\ovw},$$
		 with $\ovw = 1/8$ and some $\gamma >0$. Define further:
		 \begin{align*}
		 t_1 \defeq  ( 1+ \tau )^{-1} \quad \text{and } \quad
		 t_2 \defeq  ( 1+ \zeta )^{-1}.
		 \end{align*}
		 Then we have $0 < t_1 < t_2 < 1$ and we suppose that the observation scheme satisfies
		 $$\Delta_n = o(n^{-t_1}) \quad \text{ and } \quad n^{-t_2} = o(\Delta_n).$$ 
\item \label{DriDiffMomCond} \textit{Conditions on the drift and the diffusion coefficient:} \\
      For the function
			$$A(\omega) \defeq \left\{ \sup \limits_{s \in \R} \left| b_s(\omega) \right| \vee \sup \limits_{s \in \R} 
			\left| \sigma_s(\omega) \right| \right\},$$
			on $\Omega$ we have
			$$\Eb^{\ast} A^m < \infty,$$
			with 
			$$m = \bigg ( \left \lfloor \frac{8 + 7 \beta - \beta^2}{3 - \beta} \right \rfloor +1 \bigg ) \vee 4 \in \lbrace 4, \ldots, 18
			\rbrace,$$
			where $\Eb^{\ast}$ denotes outer expectation and $\lfloor z \rfloor$ is the largest integer smaller or equal to $z$. \qed
\end{compactenum}
\end{condition}

\begin{remark}
While Condition \ref{EasierCond} (c) is extremely mild, as it requires only a bound on the moments of drift and volatility, the two other assumptions are more restrictive: 

Part (a) basically says that the L\'evy measure has a continuous L\'evy density, which behaves near zero like the one of a $\beta$-stable process, whereas it has to decay sufficiently fast at infinity. Such conditions are well-known in the literature and often used in similar works on high-frequency statistics; see e.g.\ \cite{AitJac09} or \cite{AitJac10}. Common models in finance like the variance gamma process for the log stock price also satisfy our assumptions (see for instance \cite{Mad98}). Also, the function 
\[
\tilde \rho(x) = \begin{cases}
			                        0, \quad &\text{if } x=0 \\
															e^{-1/|x|}, \quad &\text{if } |x| >0 \\
															\end{cases}
\]
is suitable for any choice of the constants $\beta$ and $\tau$. In practice, however, one would like to work with a polynomial decay at zero, in which case the condition on $p$ comes into play. Here, the smaller $\beta$ and $\tau$, the smaller $p$ can be chosen.

Besides conditions on $X$ and $\rho$, it is crucial to choose the observation scheme in a specific manner. Obviously, $\Delta_n \rightarrow 0 $ and $n \Delta_n \rightarrow \infty$ because of $0< t_1 < t_2 <1$, and one would typically pick $\Delta_n = O(n^{-y})$ and $n^{-y} = O(\Delta_n)$ for some $0< t_1 < y < t_2 <1$. \qed
\end{remark}

It is possible to work with even weaker assumptions, as can be seen from Condition \ref{Cond1} and Proposition \ref{easier} in the Appendix. Nevertheless, for the ease of exposition we stick to the set of assumptions above which are much simpler to check and to interpret.

\section{Convergence of the truncated empirical distribution function of the L\'evy measure} \label{sec:converge}
\def\theequation{3.\arabic{equation}}
\setcounter{equation}{0}

Recall from the introduction that, for a suitable function $\rho$, we consider the truncated empirical distribution functions of the L\'evy measure, which are defined as 
$$\overline N_{\rho}^{(n)}(t) = \frac{1}{n \Delta_n} \sum \limits_{i=1}^n \rho(\Deli X) \Indit(\Deli X) \Truniv.$$
These quantities can be considered as estimators for the distribution function 
$$N_{\rho}(t) = \int \limits_{- \infty}^t \rho(x) \nu(dx)$$ 
at the point $t \in \R$. Furthermore, we define the empirical processes 
$$G_{\rho}^{(n)}(t) = \sqrt{n \Delta_n}\big(\overline N_{\rho}^{(n)}(t) - N_{\rho}(t)\big).$$
Below, we state our main result.

\begin{theorem}
\label{ConvThm}
Let $X$ be an It\=o semimartingale and let $\rho \colon \R \rightarrow \R$ be a $\mathcal C^1$ function such that Condition \ref{EasierCond} is satisfied. Suppose further that the observation scheme meets the properties of Condition \ref{EasierCond}. Then we have the weak convergence
$$G_{\rho}^{(n)} \weak \Gb_{\rho}$$
in $\linfr$, where $\Gb_{\rho}$ is a tight mean zero Gaussian process in $\linfr$ with covariance function
$$H_{\rho}(u,v) \defeq \int \rho^2(x) \mathtt 1_{(- \infty, u \wedge v]}(x) \nu(dx).$$
Additionally, the sample paths of $\Gb_{\rho}$ are almost surely uniformly continuous with respect to the semimetric
$$d_{\rho}(u,v) = \left\{ \int \rho^2(x) \mathtt 1_{(u \wedge v, u \vee v]}(x) \nu(dx) \right\}^{1/2}.$$
\end{theorem}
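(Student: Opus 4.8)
The plan is to verify the two standard ingredients for weak convergence in $\linfr$: convergence of the finite-dimensional distributions of $G_\rho^{(n)}$ to the centred Gaussian law with covariance $H_\rho$, and asymptotic tightness of $(G_\rho^{(n)})_n$. Since $\rho(x)^2\le K^2(1\wedge x^2)$ forces $\int\rho^2\,d\nu<\infty$, the finite measure $\rho^2\nu$ makes $(\R,d_\rho)$ totally bounded, and the two ingredients together produce a tight limit whose paths are almost surely $d_\rho$-uniformly continuous — exactly the process $\Gb_\rho$ in the statement. Before either step I would localise: a routine stopping-time argument lets one replace the moment bound in Condition \ref{EasierCond}(c) by uniform boundedness of $b$ and $\sigma$, which may then be assumed throughout.

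The heart of the argument is a chain of reductions that strips the truncated increments down to their jump content. Write $X=X^c+(\text{jump part})$ with $X^c$ the drift-plus-diffusion part, and split the jumps at a small level $\alpha$ into finitely many large jumps and a compensated small-jump remainder (giving approximating processes of the type $\tibigjal$, $\hatbigjal$). Because $v_n=\gamma\Delta_n^{1/8}$ sits strictly above the continuous scale $\Delta_n^{1/2}$, I would show successively that intervals with no jump contribute negligibly, since their increment is truncated away; that intervals carrying two or more large jumps are negligible, their expected number being $O(n\Delta_n^2)$; and that on the remaining single-jump intervals one may replace $\rho(\Deli X)\Indit(\Deli X)\Truniv$ by $\rho(J_i)\Indit(J_i)$, where $J_i$ is the jump, using $|\rho'|\le K|\cdot|^{p-1}$ to bound the replacement error. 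After these steps $\overline N_\rho^{(n)}(t)$ is, uniformly in $t$, asymptotically equal to $\frac1{n\Delta_n}\int_0^{n\Delta_n}\!\int_{\R}\rho(x)\Indit(x)\mathtt 1_{\{|x|>v_n\}}\,\mu(ds,dx)$, a functional of the jump measure $\mu$. Quantifying each error uniformly in $t$ — in particular the discrepancy between $\Indit(\Deli X)$ and $\Indit(J_i)$ caused by the diffusion when a jump lands within $O(\Delta_n^{1/2})$ of $t$ — is where the rate exponents $\ovw=1/8$, $t_1$, $t_2$ and the large $p$ all enter.

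With the estimator reduced to this form, $G_\rho^{(n)}(t)$ is asymptotically the rescaled centred integral $\frac1{\sqrt{n\Delta_n}}\int_0^{n\Delta_n}\!\int_{\R}f_t(x)\,(\mu-\overline\mu)(ds,dx)$ with $f_t(x)=\rho(x)\Indit(x)\mathtt 1_{\{|x|>v_n\}}$, plus the deterministic truncation bias $-\sqrt{n\Delta_n}\int_{|x|\le v_n}\rho(x)\Indit(x)\,d\nu$. The bias is $O(\sqrt{n\Delta_n}\,v_n^{p-\beta})$ and vanishes uniformly in $t$ precisely because $p$ is large relative to $\beta$ and the sampling exponents. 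Finite-dimensional convergence then follows from the central limit theorem for integrals of the Poisson measure $\mu$ over the growing horizon $[0,n\Delta_n]$, combined with the Cram\'er--Wold device; the covariance of the $u$- and $v$-marginals is $\int f_uf_v\,d\nu=\int\rho^2\,\mathtt 1_{(-\infty,u\wedge v]}\mathtt 1_{\{|x|>v_n\}}\,d\nu\to H_\rho(u,v)$, which identifies the limit.

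Finally I would establish asymptotic tightness through asymptotic equicontinuity with respect to $d_\rho$, i.e.\ $\lim_{\delta\downarrow0}\limsup_n\Prob^*\big(\sup_{d_\rho(u,v)<\delta}|G_\rho^{(n)}(u)-G_\rho^{(n)}(v)|>\eps\big)=0$ for every $\eps>0$. As $t\mapsto\Indit$ is monotone, the class $\{x\mapsto\rho(x)\Indit(x)\}_t$ has a finite $L^2(\nu)$ bracketing integral, so a bracketing/chaining maximal inequality reduces the claim to the increment bound $\Exp|G_\rho^{(n)}(v)-G_\rho^{(n)}(u)|^2\lesssim d_\rho(u,v)^2$ for the reduced process, which is again read off the covariance computation together with uniform control of the reduction errors. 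I expect this tightness step to be the main obstacle: it must be carried out uniformly over all of $\R$ and in the genuine presence of the diffusion, which contaminates the indicator near every threshold $t$, so it forces the simultaneous, uniform-in-$t$ version of each estimate from the reduction stage.
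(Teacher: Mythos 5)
Your overall architecture --- finite-dimensional convergence plus asymptotic $d_\rho$-equicontinuity for $G_\rho^{(n)}$ itself, after reducing the truncated increments to a functional of the jump measure --- is coherent but genuinely different from the paper's. The paper never proves tightness of the full process directly: it splits $G_{\rho}^{(n)}=G_{\rho,n}^{(\alpha)}+G_{\rho,n}^{\prime(\alpha)}$ at a fixed jump level $\alpha$, proves weak convergence of the large-jump part by approximating with the compound Poisson processes $L^{(n)}$ and invoking Kosorok's CLT for manageable triangular arrays (Lemma \ref{step1}, Propositions \ref{BiasAbschProp} and \ref{LevyCLTProp}), shows the small-jump part is uniformly negligible as $\alpha\to 0$ (Lemma \ref{step3}), and patches with a bounded-Lipschitz argument. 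Your bracketing/chaining route for the reduced pure-jump process and the Lindeberg CLT for the fidis are plausible substitutes for the manageability argument, and your truncation-bias estimate matches Proposition \ref{BiasAbschProp}.

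The genuine gap sits exactly at the point you flag but do not resolve: the uniform-in-$t$ control of the set of $i$ with $\Indit(\Deli X)\neq\Indit(J_i)$. On the good event the remainder $\Deli \tilde X^{\prime n}$ is only guaranteed to satisfy $|\Deli \tilde X^{\prime n}|\le v_n/2$ (not $O(\Delta_n^{1/2})$), so the indicator can flip whenever $J_i$ lands in the window $[t-v_n/2,t+v_n/2]$. For fixed $t$ the expected number of such $i$ is $O(n\Delta_n v_n)$, each contributing $(n\Delta_n)^{-1/2}$, which is fine; but you need $\sup_t$ of this count, and $\Eb\sup_t$ is not $\sup_t\Eb$. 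The paper's resolution --- the central idea missing from your plan --- is an anti-clustering property of $\nu$ (Condition \ref{Cond1}(a)(4), derived from the density bounds via Proposition \ref{easier}) which implies that with probability tending to one no two relevant jumps in the entire sample lie within $\Delta_n^{\ovr}$ of each other; hence every window of width $v_n$ contains at most $v_n\Delta_n^{-\ovr}$ of them and the flip term is $O(1/\sqrt{n\Delta_n^{1+2(\ovr-\ovw)}})\to 0$ on that event. Moreover this counting only works for jumps above a cutoff ($\alpha/4$ for the term $C_n$, $\Delta_n^{\ovv}$ for $\hat C_n$); jumps in $(v_n,\Delta_n^{\ovv}]$ must be treated separately through $|\rho(x)|\le K|x|^p$ (the term $\hat E_n$), so one cannot handle all jumps above $v_n$ in a single stroke as your reduction proposes. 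Without a substitute for this anti-concentration input the reduction error is not $o_{\Prob}(1)$ in sup-norm, and neither your fidi identification nor your tightness argument transfers back to $G_\rho^{(n)}$. A secondary caveat: the paper does not localize but works directly with the moment bound $\Eb^{\ast}A^m<\infty$ via Markov's inequality; over the growing horizon $n\Delta_n\to\infty$ your ``routine'' stopping-time localization needs the fact that $A$ dominates the coefficients uniformly in time, and deserves at least a line of justification.
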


\begin{remark}
\label{RescBMRem}
Set $c_\rho = \int \rho^2(x) \nu(dx)$ and consider a standard Brownian motion $\Bb$ on 
$[0,c_\rho]$. It is a well known fact (see for instance Section 8 in \cite{Bil99}) that the law of this process is tight in $\ell^{\infty}([0,c_\rho])$. Furthermore, the sample paths of $\Bb$ are uniformly continuous with respect to the Euclidean distance and for each $\epsilon, \eta >0$ there is a $\delta >0$ such that
\begin{eqnarray}
\label{EquicontAbsch}
\Prob \big ( \sup \limits_{|u-v|^{1/2} < \delta} \left| \Bb (u) - \Bb(v) \right| > \epsilon \big ) < \eta.
\end{eqnarray}
This is a consequence of Theorem 1.5.7 and Addendum 1.5.8 in \cite{VanWel96}. 

Because of Lemma 1.3.12(ii) in the previously mentioned reference two tight Borel laws on $\ell^{\infty}(T)$ (for an arbitrary set $T$) are equal if they have the same marginal distributions. Therefore the limit distribution of Theorem \ref{ConvThm} is equal to the law of the rescaled Brownian motion
$$\Bb_\rho(t) = \Bb \bigg ( \int \rho^2(x) \mathtt 1_{(- \infty, t]}(x) \nu(dx) \bigg ),$$
because the latter process is in fact tight in $\linfr$ by \eqref{EquicontAbsch} and Theorem 1.5.6 in \cite{VanWel96}. The sample paths of $\Bb_\rho$ are also uniformly continuous with respect to $d_\rho$. \qed
\end{remark}

Let us sketch the main idea behind the proof of Theorem \ref{ConvThm}. We choose an auxiliary function $\Psi \colon \R_+ \rightarrow \R$, which is $\mathcal C^{\infty}$ and satisfies $\mathtt 1_{[1, \infty)}(x) \leq \Psi(x) \leq \mathtt 1_{[1/2, \infty)}(x)$ for all $x \in \R_+$. For $\alpha >0$ define $\Psi_{\alpha} \colon \R \rightarrow \R$ through $\Psi_{\alpha}(x) = \Psi(|x|/\alpha)$ and let $\Psi_{\alpha}^{\prime} \colon \R \rightarrow \R$ be the function $\Psi_{\alpha}^{\prime}(x) = 1 - \Psi_{\alpha}(x)$. These functions are used to distinguish between small and large increments of $X$ which need different treatments. 

For the function $\rho$ we define $\rho_{\alpha}(x) = \rho(x) \Psi_{\alpha}(x)$ and $\rho_{\alpha}^{\prime}(x) = \rho(x) \Psi^{\prime}_{\alpha}(x)$. Furthermore, let 
$$g_t^{(\alpha)}(x) = \rho(x) \Psi_{\alpha}(x) \Indit(x) \quad \text{ and } \quad g_t^{\prime (\alpha)} (x) = 
\rho(x) \Psi^{\prime}_{\alpha}(x) \Indit(x),$$
for $x,t \in \R$ and define the following empirical processes:
\begin{eqnarray*}
&G_{\rho,n}^{(\alpha)}(t) = \sqrt{n \Delta_n} \left\{ \frac{1}{n \Delta_n} \sum \limits_{i=1}^n g_t^{(\alpha)}(\Deli X) \Truniv 
- N_{\rho_{\alpha}}(t) \right\}, \\
&G_{\rho,n}^{\prime (\alpha)}(t) = \sqrt{n \Delta_n} \left\{ \frac{1}{n \Delta_n} \sum \limits_{i=1}^n g_t^{\prime (\alpha)}(\Deli X) \Truniv - N_{\rho^{\prime}_{\alpha}}(t) \right\}.
\end{eqnarray*}
Then, of course, we have $G_{\rho}^{(n)}(t) = G_{\rho,n}^{(\alpha)}(t) + G_{\rho,n}^{\prime (\alpha)}(t)$.

A standard argument laid out in the Appendix shows that it suffices to prove three auxiliary lemmas in order to establish Theorem \ref{ConvThm}. The first one regards the behaviour of the large jumps, i.e.\ it holds for $G_{\rho,n}^{(\alpha)}$ and a fixed $\alpha > 0$. 

\begin{lemma} \label{step1}
If Condition \ref{EasierCond} is satisfied, we have the weak convergence
$$G_{\rho,n}^{(\alpha)} \weak \Gb_{\rho_{\alpha}}$$
in $\linfr$ for each fixed $\alpha >0$, where $\Gb_{\rho_{\alpha}}$ denotes a tight centered Gaussian process with covariance function 
$$H_{\rho_{\alpha}}(u,v) = \int \rho_{\alpha}^2(x) \mathtt 1_{(- \infty, u \wedge v]}(x) \nu (dx).$$
The sample paths of $\Gb_{\rho_{\alpha}}$ are almost surely uniformly continuous with respect to the semimetric
$$d_{\rho_{\alpha}}(u,v) = \left\{ \int \rho_{\alpha}^2(x) \mathtt 1_{(u \wedge v, u \vee v]}(x) \nu(dx) \right\}^{1/2}.$$
\end{lemma}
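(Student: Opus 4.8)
The plan is to split the problem into two pieces: an idealized empirical process built directly from the jumps of $X$, whose weak limit is transparent, and a high-frequency approximation step showing that the observed, truncated process $G_{\rho,n}^{(\alpha)}$ is uniformly close to the idealized one. The target covariance $H_{\rho_{\alpha}}$ is precisely that of a normalized compound-Poisson empirical process, which dictates the first piece. Since $\rho_{\alpha} = \rho \Psi_{\alpha}$ is supported on $\{|x| \geq \alpha/2\}$, where $\nu$ is a \emph{finite} measure, the jumps of $X$ that can contribute form a finite-activity stream with intensity $\lambda_{\alpha} = \nu(\{|x| \geq \alpha/2\})$. I would therefore introduce the idealized process
\[
\tilde G_n(t) = \sqrt{n \Delta_n}\Big( \tfrac{1}{n \Delta_n} \sum_{s \le n\Delta_n} g_t^{(\alpha)}(\Delta X_s) - N_{\rho_{\alpha}}(t) \Big),
\]
prove $\tilde G_n \weak \Gb_{\rho_{\alpha}}$ in $\linfr$, and then separately show $\sup_t | G_{\rho,n}^{(\alpha)}(t) - \tilde G_n(t)| \pto 0$.

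For the idealized process, over $[0, n\Delta_n]$ there are $K_n \sim \mathrm{Poisson}(n\Delta_n \lambda_{\alpha}) \to \infty$ relevant jumps whose marks are i.i.d.\ with law $\nu|_{\{|x| \geq \alpha/2\}} / \lambda_{\alpha}$. Conditionally on $K_n$, $\tilde G_n$ is a classical empirical process indexed by the class $\mathcal F = \{ g_t^{(\alpha)} : t \in \R\}$. This class is Donsker: the smooth factor $\rho \Psi_{\alpha}$ is fixed and bounded, the only $t$-dependence is through the monotone family $\{\Indit\}$, so $\mathcal F$ has a bounded, $\nu$-square-integrable envelope $\rho_{\alpha}$ and finite bracketing integral. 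Finite-dimensional convergence follows from the CLT for compound-Poisson functionals with limiting covariance $\int g_u^{(\alpha)} g_v^{(\alpha)} \, d\nu = H_{\rho_{\alpha}}(u,v)$, and asymptotic equicontinuity with respect to the natural $L^2(\nu)$ semimetric $d_{\rho_{\alpha}}$ yields tightness together with the stated uniform continuity of the limiting sample paths.

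The approximation step is where the analytic work lies. First, since $v_n \to 0$ we have $v_n < \alpha/2$ for all large $n$, hence $\Truniv = 1$ whenever $g_t^{(\alpha)}(\Deli X) \neq 0$; the truncation is asymptotically irrelevant for this large-jump component. I would then decompose $X = X^b + X^s$, with $X^b$ collecting the finitely many jumps of magnitude exceeding $\alpha/4$ and $X^s$ the remainder, and control three types of intervals: (i) intervals with no $X^b$-jump, whose increment exceeds $\alpha/2$ only with a probability small enough---via the moment Condition \ref{EasierCond}(c) and standard high-frequency bounds on $\Deli X^s$---that their total contribution vanishes after division by $\sqrt{n\Delta_n}$; (ii) intervals with two or more $X^b$-jumps, of which there are $O(n\Delta_n^2)$, contributing $O(\sqrt{n\Delta_n^3}) \to 0$ under the observation scheme; and (iii) intervals with exactly one jump $\xi_k$, where I replace $g_t^{(\alpha)}(\Deli X)$ by $g_t^{(\alpha)}(\xi_k)$. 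On $\{|x| \geq \alpha/2\}$ the factor $\rho \Psi_{\alpha}$ is Lipschitz, so its contribution to the error is governed by the perturbation $\Deli X - \xi_k = \Deli X^s$, which is uniformly small.

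The main obstacle is the discontinuity of $\Indit$ in step (iii), handled \emph{uniformly} in $t$. The perturbation $\Deli X^s$ can push $\Deli X$ across the threshold $t$, and although for fixed $t$ this is a negligible event, the supremum over $t \in \R$ requires care. I expect to bound this error by an increment of the idealized empirical process over a small $d_{\rho_{\alpha}}$-ball whose radius is controlled by the size of the perturbations, and then invoke the asymptotic equicontinuity from the first part to render it uniformly negligible. It is precisely here that the exponents in Condition \ref{EasierCond}(b) linking $v_n$, $\Delta_n$ and $n$ are used, and forcing this cross-threshold term to vanish uniformly is the delicate point of the argument.
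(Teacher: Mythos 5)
Your proposal is correct in outline and shares the paper's overall architecture -- replace $G_{\rho,n}^{(\alpha)}$ by an idealized finite-activity empirical process, prove a Donsker theorem for the latter, and show the replacement error is uniformly negligible via the same three-case interval decomposition (no big jump / two or more big jumps / exactly one big jump, with the smooth part of the error handled by the Lipschitz property of $\rho_\alpha$ away from zero). It differs from the paper in two genuine respects. First, your idealized process is indexed by the jumps themselves, so it is \emph{exactly} centered at $N_{\rho_\alpha}(t)$ by the compensation formula; the paper instead works with interval increments of the truncated L\'evy processes $L^{(n)}=(x\Trunx)\star\mu$, which forces a separate uniform bias estimate (Proposition \ref{BiasAbschProp}, of order $\Delta_n^{\ovw}$, killed by $n\Delta_n^{1+2\ovw}\to 0$) and a triangular-array CLT (Kosorok's Theorem 11.16, with AMS and manageability replacing your bracketing argument). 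Your route buys a cleaner centering at the cost of an extra step matching jump counts to interval counts, which you correctly absorb into your case (ii). Second, and more substantively, the cross-threshold term is handled differently: the paper shows that with probability tending to one all big-jump values are pairwise separated by more than $\Delta_n^{\ovr}$ (this is what Condition \ref{Cond1}(a)(4) is for), so that for every $t$ at most $v_n/\Delta_n^{\ovr}$ summands survive, giving a \emph{deterministic} bound $O(1/\sqrt{n\Delta_n^{1+2(\ovr-\ovw)}})$; you instead bound the count of jumps in the width-$v_n$ window around $t$ by a centered empirical-process increment plus $\sqrt{n\Delta_n}\,\nu(\text{window})$, and the two ingredients you need -- boundedness of $h$ on $\{|x|>\alpha/4\}$ so that the centering term is $O(\sqrt{n\Delta_n^{1+2\ovw}})\to 0$, and asymptotic equicontinuity over $d$-balls of radius $O(\sqrt{v_n})$ -- are both available under Condition \ref{EasierCond}, so your argument closes and even avoids the double-integral separation condition entirely. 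One repair is needed there: equicontinuity of the $\rho_\alpha$-weighted process does not control a raw count of jumps unless $\rho_\alpha$ is bounded below on the relevant set, which is not assumed; you should invoke instead the (also Donsker) empirical process indexed by the unweighted indicators $\mathtt 1_{(-\infty,t]}\mathtt 1_{\{|x|>\alpha/4\}}$ under the finite measure $\nu\vert_{\{|x|>\alpha/4\}}$. With that adjustment, and with the perturbations $\Deli\tilde X^{\prime n}$ controlled at level $v_n/2$ (not merely $\alpha/4$) exactly as in the paper's event $B_n$, your proof goes through.
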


The general idea behind the proof of Lemma \ref{step1} is to approximate the distribution function $\overline N_{\rho_\alpha}^{(n)}$ with empirical distribution functions of suitable L\'evy processes, for which we can show weak convergence to a Gaussian process using a central limit theorem for empirical processes. Precisely, let $\mu$ be the Poisson random measure associated with the jumps of $X$. Then we consider the L\'evy processes 
$$L^{(n)} = (x \Trunx) \star \mu$$
with the truncation $v_n = \gamma \Delta_n^{\ovw}$ as above. Note that these processes are well-defined, even when the jumps are not summable. The auxiliary empirical processes are defined in terms of a function $f$, for which we plug in $\rho_\alpha$ and $\rho_{\alpha}^{\prime}$ later. Precisely, 
\begin{multline}
\label{EmpPrYDefEqn}
Y_f^{(n)}(t) = \sqrt{n \Delta_n} \bigg \{ \frac{1}{n \Delta_n} \sum \limits_{i=1}^n [f(\Deli L^{(n)}) \Indit (\Deli L^{(n)})  \\
 - \Eb (f(\Deli L^{(n)}) \Indit (\Deli L^{(n)}))] \bigg \}
\end{multline}
for $t \in \R$, where $f \colon \R \rightarrow \R$ is a continuous function which satisfies $|f(x)| \leq K(1 \wedge x^2)$ for some $K>0$. Since $f$ is bounded, expectations always exist. 

Proving weak convergence of the empirical processes $Y_f^{(n)}$ is advantageous, as they consist of a sum of independent increments for which standard tools are available. We begin, however, with a claim which is needed to control the estimation error, as it proves that the bias due to estimating $\Eb (f(\Deli L^{(n)}) \Indit (\Deli L^{(n)}))$ instead of $N_f(t)$ is small compared to the rate of convergence. Due to the simple structure of the L\'evy processes $L^{(n)}$ the proof holds under much weaker conditions than in \cite{Nic15} in their Proposition 17. 

\begin{proposition}
\label{BiasAbschProp}
Suppose Condition \ref{EasierCond} is satisfied and let $f \colon \R \rightarrow \R$ be a Borel-measurable function with $|f(x)| = O(|x|^p)$ as 
$|x| \rightarrow 0$ and $|f(x)| \leq K(1 \wedge x^2)$ for all $x \in \R$ and a $K>0$. Then we have
\begin{align}
\label{BiasAbschEqn}
\sup \limits_{t \in \overline \R} \left| \frac{1}{\Delta_n} \Eb \left\{ f(L^{(n)}_{\Delta_n}) \Indit (L^{(n)}_{\Delta_n}) \right\} - N_f(t) \right| = O(\Delta_n^{\ovw}),
\end{align}
with $\overline \R = \R \cup \lbrace - \infty, + \infty \rbrace$.
\end{proposition}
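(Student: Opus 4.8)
The plan is to exploit the fact that $L^{(n)}=(x\Trunx)\star\mu$ is a compound Poisson process. Indeed, since the L\'evy density satisfies $h(x)\le K|x|^{-(1+\beta)}$ near $0$ with $\beta<2$ and $\nu$ integrates $1\wedge x^2$, the restriction of $\nu$ to $\{|x|>v_n\}$ is a finite measure for every fixed $n$; write $\lambda_n\defeq\nu(\{|x|>v_n\})$ for the resulting jump intensity. The jumps of $L^{(n)}$ are then i.i.d.\ with law $\nu|_{\{|x|>v_n\}}/\lambda_n$, and the number $N_{\Delta_n}$ of jumps on $[0,\Delta_n]$ is Poisson with mean $\lambda_n\Delta_n$. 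A preliminary estimate I would record is the rate of $\lambda_n$: splitting $\int h$ over $\{v_n<|x|\le 1\}$ and $\{|x|>1\}$ and using $\int_{v_n}^1 x^{-(1+\beta)}\,dx=O(v_n^{-\beta})$ (here $\beta>0$ is used) together with $\nu(\{|x|>1\})<\infty$ gives $\lambda_n=O(v_n^{-\beta})=O(\Delta_n^{-\beta\ovw})$.

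Next I would condition on $N_{\Delta_n}$ and evaluate $\Eb\{f(L^{(n)}_{\Delta_n})\Indit(L^{(n)}_{\Delta_n})\}$ term by term. On $\{N_{\Delta_n}=0\}$ we have $L^{(n)}_{\Delta_n}=0$, and since the hypothesis $|f(x)|=O(|x|^p)$ as $|x|\to 0$ forces $f(0)=0$, this event contributes nothing. On $\{N_{\Delta_n}=1\}$ the increment is distributed as a single jump, so this term equals
\[
\lambda_n\Delta_n e^{-\lambda_n\Delta_n}\cdot\frac{1}{\lambda_n}\int_{|x|>v_n}f(x)\Indit(x)\,\nu(dx)=\Delta_n e^{-\lambda_n\Delta_n}\int_{|x|>v_n}f(x)\Indit(x)\,\nu(dx).
\]
Dividing by $\Delta_n$, this is exactly the quantity I must match with $N_f(t)$. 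Finally, using boundedness $|f|\le K$, the contribution of $\{N_{\Delta_n}\ge 2\}$ to $\frac{1}{\Delta_n}\Eb\{\cdots\}$ is, uniformly in $t$, at most $\frac{K}{\Delta_n}\Prob(N_{\Delta_n}\ge 2)\le\frac{K}{2}\lambda_n^2\Delta_n$, where I use the elementary bound $1-e^{-x}(1+x)\le x^2/2$ for $x\ge 0$.

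It then remains to control $e^{-\lambda_n\Delta_n}\int_{|x|>v_n}f\,\Indit\,\nu(dx)-N_f(t)$ uniformly in $t$, which I would rewrite as
\[
(e^{-\lambda_n\Delta_n}-1)\int_{|x|>v_n}f(x)\Indit(x)\,\nu(dx)-\int_{|x|\le v_n}f(x)\Indit(x)\,\nu(dx).
\]
For the first piece, $|e^{-\lambda_n\Delta_n}-1|\le\lambda_n\Delta_n$ while $|\int_{|x|>v_n}f\Indit\,\nu(dx)|\le K\int(1\wedge x^2)\,\nu(dx)<\infty$, so it is $O(\lambda_n\Delta_n)$ uniformly in $t$. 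For the second (small-jump) piece I majorise $\Indit$ by $1$ and use $|f(x)|=O(|x|^p)$ together with $h(x)\le K|x|^{-(1+\beta)}$ to get $\int_{|x|\le v_n}|f|\,\nu(dx)=O\big(\int_0^{v_n}x^{p-1-\beta}\,dx\big)=O(v_n^{p-\beta})$, again uniform in $t$ (note $p-\beta>0$).

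Collecting the three errors, the left-hand side of \eqref{BiasAbschEqn} is $O(\lambda_n\Delta_n)+O(\lambda_n^2\Delta_n)+O(v_n^{p-\beta})$. Substituting $\lambda_n=O(\Delta_n^{-\beta\ovw})$ and $v_n=\gamma\Delta_n^{\ovw}$ with $\ovw=1/8$ turns these into $O(\Delta_n^{1-\beta/8})$, $O(\Delta_n^{1-\beta/4})$ and $O(\Delta_n^{(p-\beta)/8})$; since $0<\beta<2$ and $p>8$, each exponent exceeds $\ovw=1/8$, so each term is $O(\Delta_n^{\ovw})$ and the claim follows. I do not expect a genuine analytic obstacle here; the only real care is the bookkeeping of exponents — in particular verifying that the growth $\lambda_n\asymp v_n^{-\beta}$ of the jump intensity is dominated by the accompanying powers of $\Delta_n$ — and making every estimate uniform in $t$ by bounding the indicator $\Indit$ by $1$ inside the integrals.
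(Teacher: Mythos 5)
Your proposal is correct and follows essentially the same route as the paper: both exploit the compound Poisson structure of $L^{(n)}$, condition on the number of jumps in $[0,\Delta_n]$, isolate the single-jump term $e^{-\lambda_n\Delta_n}\int_{\{|x|>v_n\}}f\,\Indit\,\nu(dx)$, and control the three error sources (two or more jumps, the factor $1-e^{-\lambda_n\Delta_n}$, and the truncated small-jump integral) by the same exponent bookkeeping. The only cosmetic difference is that you bound the jump intensity directly via the density estimate $h(x)\le K|x|^{-(1+\beta)}$, whereas the paper routes this through the moment condition $\int(1\wedge|x|^{r+\delta})\,\nu(dx)<\infty$ of its weaker Condition \ref{Cond1}.
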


The following claim now states weak convergence of $Y_f^{(n)}$. Its proof relies heavily on a result from \cite{Kos08} which is tailored for triangular arrays of independent processes. 

\begin{proposition}
\label{LevyCLTProp}
Suppose Condition \ref{EasierCond} is satisfied and let $f \colon \R \rightarrow \R$ be a continuous function with $|f(x)| \leq K(1 \wedge |x|^p)$ for all $x \in \R$ and some $K>0$. Then the empirical processes $Y_f^{(n)}$ from \eqref{EmpPrYDefEqn} converge weakly in $\linfr$ to the tight mean zero Gaussian process $\Gb_f$ from Lemma \ref{step1}, that is
$$Y^{(n)}_f \weak \Gb_f.$$
\end{proposition}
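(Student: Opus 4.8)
The plan is to read $Y_f^{(n)}$ as a normalised partial sum of row-wise i.i.d.\ centered processes
$$Z_{ni}(t) = \frac{1}{\sqrt{n\Delta_n}}\big[f(\Deli L^{(n)})\Indit(\Deli L^{(n)}) - \Eb\big(f(\Deli L^{(n)})\Indit(\Deli L^{(n)})\big)\big], \qquad t \in \R,$$
indexed by the class $\Fc = \{x \mapsto f(x)\mathtt 1_{(-\infty,t]}(x) : t \in \R\}$, so that $Y_f^{(n)} = \sum_{i=1}^n Z_{ni}$. The increments $\Deli L^{(n)}$ are i.i.d.\ within each row because $L^{(n)}$ is a L\'evy process, but their law $\mathcal L(L^{(n)}_{\Delta_n})$ changes with $n$; this is exactly the setting of the functional central limit theorem for manageable triangular arrays of independent stochastic processes in \cite{Kos08}, which I would invoke. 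Its hypotheses decompose into convergence of the covariances, an envelope (square-integrability plus Lindeberg) condition, manageability of $\Fc$ uniformly in $n$, and convergence of the intrinsic semimetric.

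For the covariances I would apply Proposition \ref{BiasAbschProp} to $f^2$ and to the functions $x \mapsto f^2(x)\mathtt 1_{(-\infty,s]}(x)$, noting that $f^2$ again satisfies the hypotheses there, since $|f^2(x)| = O(|x|^p)$ near $0$ (as $2p \geq p$) and $|f^2(x)| \leq K'(1\wedge x^2)$ (as $p \geq 1$). Writing $g^{(n)}_s = f(L^{(n)}_{\Delta_n})\mathtt 1_{(-\infty,s]}(L^{(n)}_{\Delta_n})$ and using row-wise independence,
$$\Cov\big(Y_f^{(n)}(u),Y_f^{(n)}(v)\big) = \frac{1}{\Delta_n}\Eb\big[f^2(L^{(n)}_{\Delta_n})\mathtt 1_{(-\infty,u\wedge v]}(L^{(n)}_{\Delta_n})\big] - \frac{1}{\Delta_n}\,\Eb\big[g^{(n)}_u\big]\,\Eb\big[g^{(n)}_v\big],$$
where the first term tends to $\int f^2(x)\mathtt 1_{(-\infty,u\wedge v]}(x)\,\nu(dx)$ and the product term is $O(\Delta_n)$ by the same proposition, hence vanishes. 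This identifies the limiting covariance as that of $\Gb_f$. Finite-dimensional convergence then follows from the Lindeberg--Feller CLT, whose Lindeberg condition is immediate because each summand is bounded by $2K/\sqrt{n\Delta_n}\to 0$.

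For the envelope conditions take $F_{ni} = |f(\Deli L^{(n)})|/\sqrt{n\Delta_n}$: then $\sum_{i=1}^n \Eb F_{ni}^2 = \frac{1}{\Delta_n}\Eb[f^2(L^{(n)}_{\Delta_n})] \to \int f^2\,d\nu =: c_f$ (Proposition \ref{BiasAbschProp} at $t=+\infty$; $c_f<\infty$ because the singularity of $\nu$ at $0$ is controlled by $2p > \beta$), and the Lindeberg condition holds trivially since $F_{ni} \leq K/\sqrt{n\Delta_n}$. The structural point is manageability: the family $\{\mathtt 1_{(-\infty,t]} : t\in\R\}$ forms a VC class, so it has polynomially bounded packing numbers, and multiplication by the fixed weight $f$ preserves this by the permanence properties of manageable classes; crucially, this bound is independent of $n$, which is what the triangular array requires. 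Finally I would check that the random $L_2$-semimetric $\rho_n(u,v) = (\Eb[(g^{(n)}_u - g^{(n)}_v)^2]/\Delta_n)^{1/2}$ converges to $d_f(u,v)$, again via Proposition \ref{BiasAbschProp}, and that $(\R,d_f)$ is totally bounded (it embeds into $[0,c_f]$ through $t\mapsto\int f^2\mathtt 1_{(-\infty,t]}\,d\nu$). Assembling these in the triangular-array FCLT yields $Y_f^{(n)} \weak \Gb_f$ in $\linfr$, with $\Gb_f$ the tight mean-zero Gaussian process of the stated covariance, realised as a rescaled Brownian motion as in Remark \ref{RescBMRem}.

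The main obstacle I anticipate is not the manageability, which is essentially classical for indicator-type classes, but the fact that the law $\mathcal L(L^{(n)}_{\Delta_n})$ varies with $n$ and assigns vanishing mass to the relevant jump events, so that every moment and semimetric computation must be renormalised by $1/\Delta_n$ and its limit extracted from Proposition \ref{BiasAbschProp} rather than read off a fixed reference measure. Equally delicate is ensuring that the Lindeberg-type negligibility of the array interacts correctly with manageability to deliver genuine asymptotic equicontinuity, so that tightness in $\linfr$ — and not merely finite-dimensional convergence — is obtained.
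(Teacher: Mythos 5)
Your proposal follows essentially the same route as the paper: both invoke the functional CLT for manageable triangular arrays of independent processes (Theorem 11.16 in \cite{Kos08}) and verify its hypotheses --- covariance convergence, envelope and Lindeberg conditions, manageability of the weighted-indicator class, and convergence of the intrinsic semimetric --- with every moment computation reduced to Proposition \ref{BiasAbschProp} exactly as you describe. The only substantive differences are that the paper establishes manageability by a direct pseudo-dimension bound (Corollary 4.10 in \cite{Pol90}) rather than by VC permanence, and that it also checks the almost-measurable-Suslin condition of Kosorok's theorem (trivially, via separability over $\Q$), a measurability hypothesis you omit but which poses no difficulty.
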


Using the previous two propositions, the final part of the proof of Lemma \ref{step1} is the justification that the error is small when replacing the original increments by those of the approximating L\'evy processes. This argument is laid out in the Appendix as well.

%

In order to obtain the result from Theorem \ref{ConvThm} we have to ensure that the limiting process $\Gb_{\rho_{\alpha}}$ converges in a suitable sense as $\alpha \rightarrow 0$. This is the content of the second lemma.   

\begin{lemma} \label{step2}
Under Condition \ref{EasierCond} the weak convergence
$$\Gb_{\rho_{\alpha}} \weak \Gb_{\rho}$$
holds in $\linfr$ as $\alpha \rightarrow 0$.
\end{lemma}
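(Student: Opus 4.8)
The plan is to leverage the explicit description of the limit laws as rescaled Brownian motions from Remark \ref{RescBMRem} and to build a single coupling on which the sup-distance between $\Gb_{\rho_\alpha}$ and $\Gb_\rho$ vanishes almost surely. Write $c_{\rho_\alpha} = \int \rho_\alpha^2 \, d\nu$ and $c_\rho = \int \rho^2 \, d\nu$; both are finite because $\rho^2(x) \le K^2 (1 \wedge x^2)^2 \le K^2 (1 \wedge x^2)$ is $\nu$-integrable, and $c_{\rho_\alpha} \le c_\rho$ since $0 \le \Psi_\alpha \le 1$. Fix one standard Brownian motion $\Bb$ on $[0, c_\rho]$ and set $\phi_\alpha(t) = \int \rho_\alpha^2 \, \mathtt 1_{(- \infty, t]} \, d\nu$ and $\phi(t) = \int \rho^2 \, \mathtt 1_{(- \infty, t]} \, d\nu$. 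Since $\phi_\alpha$ takes values in $[0, c_{\rho_\alpha}]$ and the restriction of $\Bb$ to $[0, c_{\rho_\alpha}]$ is again a standard Brownian motion, Remark \ref{RescBMRem} shows that $t \mapsto \Bb(\phi_\alpha(t))$ carries the law of $\Gb_{\rho_\alpha}$, while $t \mapsto \Bb(\phi(t))$ carries the law of $\Gb_\rho$. This realises the whole family on one probability space.

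Next I would control the time changes uniformly in $t$. Because $\rho_\alpha^2 \le \rho^2$ we have, for every $t$,
$$0 \le \phi(t) - \phi_\alpha(t) = \int (\rho^2 - \rho_\alpha^2)\, \mathtt 1_{(- \infty, t]} \, d\nu \le \int (\rho^2 - \rho_\alpha^2)\, d\nu = c_\rho - c_{\rho_\alpha}.$$
As $\alpha \to 0$ one has $\Psi_\alpha(x) \to \mathtt 1_{\lbrace x \neq 0 \rbrace}(x)$ pointwise, since $\Psi_\alpha(x) = \Psi(|x|/\alpha) = 1$ whenever $\alpha < |x|$; hence $\rho^2(1 - \Psi_\alpha^2) \to 0$ pointwise and is dominated by the integrable function $\rho^2$. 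Dominated convergence then gives $c_\rho - c_{\rho_\alpha} \to 0$, so that $\sup_{t \in \R} |\phi_\alpha(t) - \phi(t)| \le c_\rho - c_{\rho_\alpha} \to 0$.

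I would then transfer this to the processes via the modulus of continuity $\omega_{\Bb}(\eta) = \sup \lbrace |\Bb(s) - \Bb(s')| \colon s, s' \in [0, c_\rho],\ |s - s'| \le \eta \rbrace$. Almost surely $\Bb$ is uniformly continuous on the compact interval $[0, c_\rho]$, so $\omega_{\Bb}(\eta) \to 0$ a.s.\ as $\eta \downarrow 0$. Consequently
$$\sup_{t \in \R} \big| \Bb(\phi_\alpha(t)) - \Bb(\phi(t)) \big| \le \omega_{\Bb}\big( c_\rho - c_{\rho_\alpha} \big) \longrightarrow 0 \quad \text{a.s. as } \alpha \to 0.$$
This domination also settles the measurability concerns attached to $\linfr$: the possibly non-measurable supremum on the left is bounded by the genuine random variable on the right, so along any sequence $\alpha_k \downarrow 0$ the coupled processes converge to $\Gb_\rho$ in the sup-norm almost surely, hence in outer probability. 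Since almost sure convergence in $\linfr$ implies weak convergence and the coupled processes carry the laws of $\Gb_{\rho_{\alpha_k}}$ and $\Gb_\rho$, the assertion $\Gb_{\rho_\alpha} \weak \Gb_\rho$ follows.

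The main obstacle is not analytic but bookkeeping: one must argue carefully that a single Brownian motion on $[0, c_\rho]$ simultaneously represents every $\Gb_{\rho_\alpha}$ (using that a restriction of a Brownian motion is again a Brownian motion, together with the fact from Remark \ref{RescBMRem} that matching finite-dimensional distributions determine these tight laws), and one must handle the outer-probability/measurability subtleties of weak convergence in $\linfr$. Both are resolved cleanly by the deterministic domination through $\omega_{\Bb}(c_\rho - c_{\rho_\alpha})$, so no genuinely hard estimate is required beyond the dominated-convergence bound on $c_\rho - c_{\rho_\alpha}$.
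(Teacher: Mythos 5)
Your proof is correct, but it takes a genuinely different route from the paper's. The paper deduces Lemma \ref{step2} from the more general Proposition \ref{GalKonvLem} ($f_n\to f_0$ $\nu$-a.e.\ with a uniform bound implies $\Gb_{f_n}\weak\Gb_{f_0}$), which it proves by the abstract criterion of van der Vaart and Wellner (Theorems 1.5.4 and 1.5.7): convergence of the finite-dimensional (Gaussian) marginals plus asymptotic uniform $d_{f_0}$-equicontinuity, the latter obtained by transferring the modulus-of-continuity bound \eqref{EquicontAbsch} for the underlying Brownian motion through the uniform convergence $d_{f_n}\to d_{f_0}$. You instead build an explicit coupling: a single Brownian motion $\Bb$ on $[0,c_\rho]$ represents every $\Gb_{\rho_\alpha}$ and $\Gb_\rho$ via the time changes $\phi_\alpha,\phi$, and the monotone bound $\sup_t|\phi_\alpha(t)-\phi(t)|\le c_\rho-c_{\rho_\alpha}\to 0$ combined with uniform continuity of Brownian paths gives almost sure uniform convergence, hence weak convergence. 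Your argument is more elementary and yields a stronger (almost sure, coupled) conclusion, and it sidesteps the equicontinuity machinery entirely; the identification of the laws via Remark \ref{RescBMRem} and the domination of the non-measurable supremum by $\omega_{\Bb}(c_\rho-c_{\rho_\alpha})$ are both handled correctly. The paper's formulation buys generality: Proposition \ref{GalKonvLem} is reused in the proof of Lemma \ref{step3} with $f_\alpha=\rho'_\alpha\to 0$, where no monotonicity $f_n^2\le f_0^2$ is available --- though your coupling would also adapt there, since $\sup_t|\phi_n(t)-\phi_0(t)|\le\int|f_n^2-f_0^2|\,d\nu\to 0$ by dominated convergence without any ordering assumption.
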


Its proof is a direct consequence of the following result.

\begin{proposition}
\label{GalKonvLem}
Suppose Condition \ref{EasierCond} is satisfied and let $f_n \colon \R \rightarrow \R$ ($n \in \N_0$) be Borel-measurable functions with 
$|f_n(x)| \leq K(1 \wedge x^2)$ for a constant $K>0$ and all $n \in \N_0$, $x \in \R$. Assume further that $f_n \rightarrow f_0$ converges $\nu$-a.e. Then we have weak convergence
$$\Gb_{f_n} \weak \Gb_{f_0}$$
in $\linfr$ for $n \rightarrow \infty$.
\end{proposition}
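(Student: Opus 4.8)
The plan is to exploit the rescaled–Brownian–motion representation of the limit processes from Remark \ref{RescBMRem} and to realise all the $\Gb_{f_n}$ on a single probability space by means of one Brownian motion, so that the desired weak convergence can be read off from almost sure convergence in the sup-norm. Write $c_n \defeq \int f_n^2(x)\, \nu(dx)$ and $\tau_n(t) \defeq \int f_n^2(x)\mathtt 1_{(-\infty,t]}(x)\,\nu(dx)$ for $n \in \N_0$, so that the covariance of $\Gb_{f_n}$ is $H_{f_n}(u,v) = \tau_n(u\wedge v)$. The dominating bound $f_n^2\,\mathtt 1_{(-\infty,t]} \le K^2(1\wedge x^2)$ is $\nu$-integrable, so dominated convergence together with $f_n \to f_0$ $\nu$-a.e. yields $\tau_n(t) \to \tau_0(t)$ for every $t$, and in particular $c_n \to c_0$; fix $C$ with $\sup_n c_n \le C$.

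The core of the argument is to upgrade this pointwise statement to uniform convergence of the time changes. Because $\nu$ admits the Lebesgue density $h$ of Condition \ref{EasierCond}(a), each $\tau_n$ is absolutely continuous, hence continuous and non-decreasing, and the same holds for $\tau_0$. If $c_0 = 0$ then $0 \le \tau_n \le c_n \to 0$ gives $\sup_{t}|\tau_n(t) - \tau_0(t)| \to 0$ at once. If $c_0 > 0$, then $c_n > 0$ for large $n$, and normalising $\tilde\tau_n \defeq \tau_n/c_n$ turns these into genuine distribution functions on $\R$ converging pointwise to the \emph{continuous} distribution function $\tilde\tau_0 \defeq \tau_0/c_0$; Pólya's theorem then gives $\sup_{t\in\R}|\tilde\tau_n(t) - \tilde\tau_0(t)| \to 0$, and combined with $c_n \to c_0$ this yields $\|\tau_n - \tau_0\|_\infty \defeq \sup_{t\in\R}|\tau_n(t)-\tau_0(t)| \to 0$.

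Next I would fix a single standard Brownian motion $\Bb$ on $[0,C]$ and set $\Bb_{f_n}(t) \defeq \Bb(\tau_n(t))$. By Remark \ref{RescBMRem} each $\Bb_{f_n}$ is a tight element of $\linfr$ whose law is exactly that of $\Gb_{f_n}$, so it suffices to prove $\Bb_{f_n} \weak \Bb_{f_0}$. Since $\Bb$ has almost surely uniformly continuous paths on $[0,C]$, for a.e.\ $\omega$ and every $\eps > 0$ there is $\delta > 0$ such that $|\Bb(s) - \Bb(s')| \le \eps$ whenever $|s - s'| \le \delta$; once $\|\tau_n - \tau_0\|_\infty \le \delta$ this forces $\sup_{t\in\R}|\Bb(\tau_n(t)) - \Bb(\tau_0(t))| \le \eps$. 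Hence $\|\Bb_{f_n} - \Bb_{f_0}\|_\infty \to 0$ almost surely, and as the paths $t \mapsto \Bb(\tau_n(t))$ are continuous, the limit $\Bb_{f_0}$ is a tight Borel element of $\linfr$; almost sure sup-norm convergence to such a limit entails weak convergence in $\linfr$, which is the assertion.

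I expect the main obstacle to be the uniform-convergence step for the time changes rather than the Brownian part: one must ensure that the monotone functions $\tau_n$ converge uniformly on all of $\R$, including the tails where the boundary values are $0$ and $c_n \to c_0$, which is exactly what the continuity of $\tau_0$ — a consequence of the absolute continuity of $\nu$ guaranteed by Condition \ref{EasierCond}(a) — delivers through Pólya's theorem. The remaining measurability bookkeeping needed to pass from almost sure sup-norm convergence to weak convergence in $\linfr$ is routine given the tightness supplied by Remark \ref{RescBMRem}.
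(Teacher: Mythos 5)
Your proposal is correct, and it shares the paper's starting point --- the representation $\Bb_{f_n}(t)=\Bb(\tau_n(t))$ of all limit processes through a single Brownian motion on $[0,C]$ from Remark \ref{RescBMRem} --- but it finishes differently. The paper does not attempt an almost sure coupling: it shows that $d_{f_n}\to d_{f_0}$ uniformly, deduces asymptotic uniform $d_{f_0}$-equicontinuity in probability from the modulus-of-continuity bound \eqref{EquicontAbsch}, checks convergence of the (Gaussian) marginals, and then invokes Theorems 1.5.4 and 1.5.7 in \cite{VanWel96}. You instead upgrade pointwise convergence of the time changes to $\sup_t|\tau_n(t)-\tau_0(t)|\to 0$ and conclude $\|\Bb_{f_n}-\Bb_{f_0}\|_\infty\to 0$ almost surely from uniform continuity of the Brownian paths; this is more elementary and avoids the abstract weak-convergence criterion, at the price of having to argue that a.s.\ sup-norm convergence of these (Borel measurable, tight) elements implies weak convergence, which is routine. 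One simplification you should make: the detour through absolute continuity of $\tau_0$ and P\'olya's theorem is unnecessary, since
\begin{equation*}
\sup_{t\in\R}\left|\tau_n(t)-\tau_0(t)\right|\leq \int \left|f_n^2(x)-f_0^2(x)\right|\nu(dx)\rightarrow 0
\end{equation*}
directly by dominated convergence (the integrand is bounded by $2K^2(1\wedge x^4)$, which is $\nu$-integrable); this is essentially the same one-line bound the paper uses to get uniform convergence of $d_{f_n}$, it needs no case distinction on $c_0$, and it does not use the Lebesgue density of $\nu$ at all --- consistent with the fact that the proposition is stated for general Borel $f_n$ dominated by $K(1\wedge x^2)$.
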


Finally, the contribution due to small jumps, which are comprised in the process $G_{\rho,n}^{\prime (\alpha)}$, need to be uniformly small when $\alpha$ tends to zero. This is discussed in the next lemma. 

\begin{lemma} \label{step3}
Suppose Condition \ref{EasierCond} is satisfied. Then for each $\eta >0$ we have:
$$\lim \limits_{\alpha \rightarrow 0} \limsup \limits_{n \rightarrow \infty} \Prob (\sup \limits_{t \in \R} 
|G_{\rho,n}^{\prime (\alpha)}(t)| > \eta) = 0.$$
\end{lemma}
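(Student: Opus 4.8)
The plan is to prove, for each fixed $\alpha>0$, the weak convergence $G_{\rho,n}^{\prime (\alpha)}\weak\Gb_{\rho_\alpha'}$ in $\linfr$, and only afterwards to send $\alpha\to0$, exploiting the explicit description of the limit as a time-changed Brownian motion. The point is that $\rho_\alpha'=\rho\,\Psi_\alpha'$ obeys exactly the structural conditions driving the proof of Lemma \ref{step1}: since $\rho(0)=0$ and $|\rho'(x)|\le K|x|^{p-1}$, integration gives $|\rho(x)|\le (K/p)|x|^p$ for every $x$, so that $|\rho_\alpha'(x)|\le|\rho(x)|\le K'(1\wedge|x|^p)$, while $\rho_\alpha'$ is continuous (indeed $\mathcal C^1$ for fixed $\alpha$), vanishes on $\{|x|\ge\alpha\}$ and satisfies $\rho_\alpha'(x)=O(|x|^p)$ near zero. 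Hence Propositions \ref{BiasAbschProp} and \ref{LevyCLTProp} both apply with $f=\rho_\alpha'$, and the replacement of the increments $\Deli X$ by the L\'evy increments $\Deli L^{(n)}$ carried out at the end of the proof of Lemma \ref{step1} goes through verbatim for each fixed $\alpha$. This yields $G_{\rho,n}^{\prime (\alpha)}\weak\Gb_{\rho_\alpha'}$, with $\Gb_{\rho_\alpha'}$ the tight centered Gaussian process of covariance $H_{\rho_\alpha'}$.

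Next I would pass to the supremum functional. As $z\mapsto\sup_{t\in\R}|z(t)|$ is continuous on $\linfr$, the set $\{z:\sup_{t\in\R}|z(t)|\ge\eta\}$ is closed, and $\Gb_{\rho_\alpha'}$ is a tight Borel law, the portmanteau theorem gives, for each fixed $\alpha$,
\[
\limsup_{n\to\infty}\Prob\big(\sup_{t\in\R}|G_{\rho,n}^{\prime (\alpha)}(t)|>\eta\big)\le\Prob\big(\sup_{t\in\R}|\Gb_{\rho_\alpha'}(t)|\ge\eta\big).
\]
By Remark \ref{RescBMRem} the right-hand process equals in law $\Bb_{\rho_\alpha'}(t)=\Bb\big(\int(\rho_\alpha')^2(x)\mathtt 1_{(-\infty,t]}(x)\nu(dx)\big)$ for a standard Brownian motion $\Bb$ on $[0,c_{\rho_\alpha'}]$ with $c_{\rho_\alpha'}=\int(\rho_\alpha')^2(x)\nu(dx)$, so that $\sup_{t\in\R}|\Gb_{\rho_\alpha'}(t)|$ has the law of $\sup_{s\in[0,c_{\rho_\alpha'}]}|\Bb(s)|$.

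Finally I would let $\alpha\to0$. The measure $\rho^2\,\nu$ is finite, since near zero its density is dominated by $K^3|x|^{3-\beta}$ with $\beta<2$, while $\rho$ is bounded and $\nu(\{|x|>1\})<\infty$; as $(\rho_\alpha')^2$ is supported in $[-\alpha,\alpha]$ and $\nu$ does not charge $0$, continuity from above yields $c_{\rho_\alpha'}\le\int_{\{|x|\le\alpha\}}\rho^2(x)\nu(dx)\to0$ as $\alpha\to0$. Since Brownian paths are almost surely continuous at the origin, $\sup_{s\in[0,c]}|\Bb(s)|\to0$ almost surely as $c\to0$, whence $\Prob\big(\sup_{s\in[0,c_{\rho_\alpha'}]}|\Bb(s)|\ge\eta\big)\to0$. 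Chaining the three displays gives $\lim_{\alpha\to0}\limsup_{n\to\infty}\Prob(\sup_{t\in\R}|G_{\rho,n}^{\prime (\alpha)}(t)|>\eta)=0$, which is the assertion.

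The step I expect to be the main obstacle is the first one: making sure that the entire approximation scheme of Lemma \ref{step1} --- especially the error control when replacing $\Deli X$ by $\Deli L^{(n)}$ --- is insensitive to using $\rho_\alpha'$ instead of $\rho_\alpha$. For fixed $\alpha$ this is routine, as all relevant constants (the growth bound $K'$ and the $\mathcal C^1$-norm of $\rho_\alpha'$, which is of order $\alpha^{-1}$) are finite; the essential discipline is to fix $\alpha$ before letting $n\to\infty$ and to defer all $\alpha$-dependence to the final, benign Brownian estimate, where no uniformity in $n$ is needed.
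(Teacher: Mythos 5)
Your overall architecture --- weak convergence of $G_{\rho,n}^{\prime(\alpha)}$ to $\Gb_{\rho^{\prime}_{\alpha}}$ for each fixed $\alpha$, then portmanteau with the (continuous) sup functional, then the observation that $c_{\rho^{\prime}_{\alpha}}\to 0$ kills the time-changed Brownian limit --- coincides with the paper's; the paper phrases the final step via Proposition \ref{GalKonvLem} and a second application of portmanteau, which reduces to the same Brownian computation you give. Your last two steps are correct, as is the application of Propositions \ref{BiasAbschProp} and \ref{LevyCLTProp} to $f=\rho^{\prime}_{\alpha}$, which handles the L\'evy-increment version $\tilde Y^{(n)}_{\rho^{\prime}_{\alpha}}$. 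The gap is exactly the point you flag and then dismiss as routine: the replacement of $\Deli X$ by $\Deli L^{(n)}$ does \emph{not} go through verbatim with $\rho^{\prime}_{\alpha}$ in place of $\rho_{\alpha}$, and the obstruction is structural, not a matter of $\alpha$-dependent constants.

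The replacement argument in Lemma \ref{step1} is organized entirely around the process $\hatbigjal$ of jumps of modulus exceeding $\alpha/4$: since $\rho_{\alpha}$ vanishes on $\{|x|\le\alpha/2\}$, any increment whose only jump above $v_n$ is of size at most $\alpha/4$ contributes zero to \emph{both} sums (case (ii) of the trichotomy), and the bounds on $a_n$, $b_n$ rest on the finiteness, uniformly in $n$, of $\widehat\delta_{\alpha}(r)$, $\widehat\delta_{\alpha}(1)$, $\widehat\delta_{\alpha}(p-1)$ --- integrals over $\{|x|>\alpha/4\}$, hence away from the singularity of $\nu$. For $\rho^{\prime}_{\alpha}$, supported in $\{|x|\le\alpha\}$, the roles are inverted: case (ii) fails outright (an increment whose dominant jump has size $\alpha/8$, say, gives two generally unequal nonzero terms), so one cannot reduce to $\hatbigjal$ and must instead track all jumps in $(u_n,2\alpha]$, which accumulate at zero. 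The corresponding moments $\widehat\delta_{n,\alpha}(y)=\int|x|^y\ind_{\lbrace u_n<|x|\le 2\alpha\rbrace}\nu(dx)$ diverge as $n\to\infty$ for $y<r$, so the analogues $c_n(\alpha)$, $d_n(\alpha)$ of $a_n$, $b_n$ require a genuine rate comparison between $u_n^{-((r+\delta)-1)_+}$ and powers of $\Delta_n$; this is precisely where the constant $\ell$ from \eqref{Tildeelldef} and \eqref{elleqn} is used. Moreover, two ingredients have no counterpart in Lemma \ref{step1}: a separate term ($\hat E_n(\alpha)$ in the paper) for increments whose jump is smaller than $\Delta_n^{\ovv}$, controlled via $|\rho(x)|\le K|x|^p$ and Condition \ref{Cond1}\eqref{ObsSchCond5}; and the separation argument for the indicator term, which must now invoke Condition \ref{Cond1}\eqref{FiLevyDistCond} (pairs of nearby jumps both of size in $(\Delta_n^{\ovv}/2,\alpha_0]$) rather than Condition \ref{Cond1}\eqref{SeLevyDistCond} used in Lemma \ref{step1}, because the relevant jumps are no longer bounded below by a fixed constant. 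As written, the first step of your plan is therefore a genuine gap requiring a separate and substantially more delicate proof.
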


\section{Appendix} \label{sec:app}
\def\theequation{4.\arabic{equation}}
\setcounter{equation}{0}

Before we prove Theorem \ref{ConvThm} and the other claims related to it, we begin with a set of alternative conditions. Here and below, $K$ or $K(\delta)$ denote generic constants which sometimes depend on an auxiliary quantity $\delta$ and may change from line to line.

\begin{condition}
\label{Cond1} 
\begin{compactenum}[(a)]
\item \textit{Conditions on the L\'evy measure and the function $\rho$:} 
			\begin{enumerate}[(1)]
			\item There exists $r \in [0,2]$ with $\int \big ( 1 \wedge |x|^{r+\delta} \big ) \nu(dx) < \infty$ for each $\delta >0$.
			      \label{BlGetCond}
			\item \label{RhoCond}
            $\rho \colon \R \rightarrow \R$ is a bounded $\mathcal C^1$-function with $\rho(0)=0$. Furthermore, there exists some $p > 2 \vee (1+ 3r)$
			      such that the derivative satisfies $| \rho^{\prime}(x) | \leq K |x|^{p-1}$ for all $x \in \R$ and some $K>0$. 
			\item $\int |x|^{p-1} \mathtt 1_{\lbrace |x| \geq 1 \rbrace} \nu(dx) < \infty$ with $p$ from \eqref{RhoCond}.
						\label{LevMeaMomCond}
			\item \begin{compactenum}[(I)]
						\item There exist $\ovr > \ovw > \ovv >0$, $\alpha_0 >0$, $q >0$ and $K>0$ such that we have for sufficiently large 
						      $n \in \mathbb N$: \label{FiLevyDistCond}
						      $$\int \int \mathtt 1_{\lbrace |u-z| \leq \Delta_n^{\ovr} \rbrace} \mathtt 1_{\lbrace \Delta_n^{\ovv} /2 < |u| \leq 
									\alpha_0 \rbrace } \mathtt 1_{\lbrace \Delta_n^{\ovv} /2 < |z| \leq \alpha_0 \rbrace} \nu(dz) \nu(du) \leq K \Delta_n^q.$$
						\item For each $\alpha >0$ there is a $K(\alpha)>0$, with
						      $$\int \int \mathtt 1_{\lbrace |u-z| \leq \Delta_n^{\ovr} \rbrace} \mathtt 1_{\lbrace |u| > \alpha \rbrace} 
									\mathtt 1_{\lbrace |z| > \alpha \rbrace} \nu(dz) \nu(du) \leq K(\alpha) \Delta_n^q,$$
									for $n \in \N$ large enough with the constants from \eqref{FiLevyDistCond}.
									\label{SeLevyDistCond}
						\end{compactenum}
			\end{enumerate}
\item \textit{Conditions on the truncation sequence $v_n$ and the observation scheme:} \\
      \label{ObsSchCondit}
      We have $v_n = \gamma \Delta_n^{\ovw}$ for some $\gamma >0$ and $\ovw$ satisfying
			$$\frac{1}{2(p-r)} < \ovw < \frac{1}{2} \wedge \frac{1}{4r}.$$
      Furthermore, the observation scheme satisfies with the constants from the previous conditions: 
      \begin{enumerate}[(1)]
			\item $\Delta_n \rightarrow 0,$
			     
			\item $n \Delta_n \rightarrow \infty,$

			\item $n \Delta_n^{1+ q/2} \rightarrow 0,$
			      \label{ObsSchCond3}
			\item $n \Delta_n^{1+2 \ovw} \rightarrow 0,$
			      \label{ObsSchCond4}
			\item $n \Delta_n^{2p \ovv -1} \rightarrow 0,$
			      \label{ObsSchCond5}
			\item $n \Delta_n^{2(1-r \ovw (1+ \epsilon))} \rightarrow 0$ for some $\epsilon >0$,
			      \label{ObsSchCond6}
			\item $n \Delta_n^{1+ 2(\ovr - \ovw)} \rightarrow \infty.$
			      \label{ObsSchCond7}
			\end{enumerate}
\item \label{DrianDiffCond} \textit{Conditions on the drift and the diffusion coefficient:} \\
			 Set
			\begin{align}
			\label{Tildeelldef}
			\tilde \ell = \begin{cases}
			                     \frac{1}{2} \big ( \epsilon \wedge \frac{1-2r \ovw}{2r \ovw} \big ), \quad \text{ if } r \leq 1  \\
													 \frac{1}{2} \big ( \epsilon \wedge \frac{1-2r \ovw}{2r \ovw} \wedge \frac{2(p-r)\ovw - 1}{2(r-1) \ovw} \big ),
													 \quad \text{ if } r >1
													 \end{cases}
			\end{align}
			with the previously established constants and $\ell = 1+ \tilde \ell$. Let
			$$m = \left \lfloor \frac{2+ r \ell}{\ell -1} \vee \frac{1+ 2 \ovw}{1/2 - \ovw} \right \rfloor + 1.$$
			There is a random variable $A$ such that
			$$|b_s(\omega)| \leq A(\omega), |\sigma_s(\omega)| \leq A(\omega) \text{ for all } (\omega,s) \in \Omega \times \R_+$$
			and
			$$\Eb A^m < \infty.$$
\end{compactenum}
\end{condition}

In the following, we will work with the previous assumptions without further mention. This is due to the following result which proves that Condition \ref{EasierCond} implies the set of conditions above. 

\begin{proposition} \label{easier}
Condition \ref{EasierCond} is sufficient for Condition \ref{Cond1}.
\end{proposition}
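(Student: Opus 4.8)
The plan is to treat Proposition \ref{easier} as a bookkeeping exercise: given the explicit data of Condition \ref{EasierCond} (the exponents $\beta,\tau,\zeta$, the threshold exponent $\ovw=1/8$, and $p=8(1+3\beta)\tfrac{1+\tau}{1-16\tau}$), I would exhibit admissible values of all the abstract constants occurring in Condition \ref{Cond1} and verify each of its items. The natural dictionary is to set $r=\beta$ and to keep the very same $p$ and $\ovw=1/8$; the remaining free constants $\ovr,\ovv,q,\alpha_0$ and $\epsilon$ are pinned down in the course of the verification. The conditions (b)(1)--(2), $\Delta_n\to0$ and $n\Delta_n\to\infty$, are immediate from $0<t_1<t_2<1$.

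First I would dispose of Condition \ref{Cond1}(a)(1)--(3) and the derivative bound. Splitting every integral into a neighbourhood of $0$, a compact annulus, and a tail, the near-zero bound $h(x)\le K|x|^{-(1+\beta)}$ gives $\int_{|x|\le1}|x|^{\beta+\delta}\nu(dx)\le K\int_{|x|\le1}|x|^{\delta-1}dx<\infty$ for every $\delta>0$, while local boundedness of $h$ on each $C_n$ and the tail bound $h(x)\le K|x|^{-p-\epsilon_h}$ handle the compact and tail pieces; this yields (a)(1) with $r=\beta$ and, combined with $p>0$, also (a)(3). The function requirements in (a)(2) transfer verbatim from \ref{EasierCond}(a)(4), and $p>2\vee(1+3\beta)$ is immediate from $\tfrac{1+\tau}{1-16\tau}>1$, which gives $p>8(1+3\beta)$.

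The analytic core is Condition \ref{Cond1}(a)(4) together with the observation-scheme list (b). For the double integral (I) I would integrate first in $z$ over the slab $\{|z-u|\le\Delta_n^{\ovr}\}$, bounding $h(z)\le K\Delta_n^{-\ovv(1+\beta)}$ on $\{|z|>\Delta_n^{\ovv}/2\}$ and the slab length by $2\Delta_n^{\ovr}$, and then in $u$ using $\int_{\Delta_n^{\ovv}/2<|u|\le\alpha_0}h(u)\,du\le K\Delta_n^{-\ovv\beta}$ (choosing $\alpha_0$ small enough that the near-zero bound applies); this produces the exponent $\ovr-\ovv(1+2\beta)$, while (II) produces the larger exponent $\ovr$ from boundedness of $h$ away from the origin, so both hold with $q:=\ovr-\ovv(1+2\beta)$. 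It then remains to choose $\ovr\in(\ovw,\ovw+\zeta/2]$ and $\ovv\in[\tfrac{2+\tau}{2p},\ovw)$ so that $q\ge2\tau$: translating each limit via $\Delta_n=o(n^{-t_1})$ and $n^{-t_2}=o(\Delta_n)$ turns (3)--(7) into the inequalities $q\ge2\tau$, $\tau\le1/4$, $\ovv\ge\tfrac{2+\tau}{2p}$, a solvable constraint on $\epsilon$, and $\ovr\le\ovw+\zeta/2$. The binding feasibility reduces, at the extreme choices $\ovv=\tfrac{2+\tau}{2p}$, $\ovr=\ovw+\zeta/2$, to $(2+\tau)(1+2\beta)\le2(1+3\beta)(1+\tau)$, i.e. $\tau+2\beta+4\beta\tau\ge0$, which always holds; the precise form of $p$ is exactly what makes the interval for $\ovv$ nonempty and this inequality strict, so admissible $\ovr,\ovv$ (hence $q$) exist, and the range requirement $\tfrac{1}{2(p-\beta)}<\ovw<\tfrac12\wedge\tfrac{1}{4\beta}$ holds because $p>4+\beta$ and $\beta<2$.

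Finally, for the drift/diffusion part (c) I would set $\epsilon=\tfrac{3-\beta}{2}$, check that it satisfies (b)(6) (which reduces to $\beta(5-\beta)<8(1-\tau)$, clear since the left side stays below $6$ and the right exceeds $7.5$), and observe that with this $\epsilon$ the minimum defining $\tilde\ell$ is attained at $\epsilon$, because the competing terms $\tfrac{4-\beta}{\beta}$ and $\tfrac{p-\beta-4}{\beta-1}$ both exceed $\tfrac{3-\beta}{2}$. This gives $\tilde\ell=\tfrac{3-\beta}{4}$, hence $\ell=\tfrac{7-\beta}{4}$ and $\tfrac{2+r\ell}{\ell-1}=\tfrac{8+7\beta-\beta^2}{3-\beta}$, while $\tfrac{1+2\ovw}{1/2-\ovw}=\tfrac{10}{3}$; a short case analysis on whether $\tfrac{8+7\beta-\beta^2}{3-\beta}\gtrless\tfrac{10}{3}$ shows that $\lfloor(\cdot)\vee\tfrac{10}{3}\rfloor+1$ coincides exactly with $(\lfloor\cdot\rfloor+1)\vee4$, so the moment order required by \ref{Cond1} equals the $m$ of \ref{EasierCond}. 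Passing from $\Eb^{\ast}A^m<\infty$ to the required $\Eb\tilde A^m<\infty$ is then achieved by replacing $A$ with its measurable cover. I expect the main obstacle to be precisely the simultaneous feasibility in the third paragraph --- keeping $\ovr,\ovv,q,\epsilon$ compatible across (3)--(7) and the two double integrals at once --- since each inequality individually is easy but they compete, and it is the engineered value of $p$ that reconciles them.
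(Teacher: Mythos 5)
Your proposal is correct and follows essentially the same route as the paper: the same dictionary $r=\beta$, $\ovw=1/8$, $\epsilon=\tfrac{3-\beta}{2}$, the same translation of the observation-scheme limits into exponent inequalities, and the same computation of $\tilde\ell$, $\ell$ and $m$; the only cosmetic difference is that you keep $\ovr,\ovv$ as feasible intervals where the paper fixes $\ovr=\tfrac{1+\zeta}{8}$, $\ovv=\tfrac{1-16\tau}{8(1+3\beta)}$ and $q=\zeta/8+2\tau$. Your slab bound for the double integral in Condition \ref{Cond1}(a)(4)(I), giving the exponent $\ovr-(1+2\beta)\ovv$, is a valid simplification of the paper's exact integration plus Taylor expansion with a case split on $\beta\lessgtr 1$ (which only yields the weaker exponent $\ovr-(1+3\beta)\ovv$), so your feasibility inequality still closes.
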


\begin{proof}
Let $0< \beta < 2$, $0< \zeta < \tau < 1/16$, $p = 8(1+3 \beta) \frac{1+ \tau}{1-16 \tau}$ and suppose that Condition \ref{EasierCond} is satisfied for these constants. In order to verify Condition \ref{Cond1} define the following quantities:
\begin{align}
\label{KonstDefEq}
r &= \beta, \quad \ovr = \frac{1+ \zeta}{8}, \quad \ovw = 1/8, \nonumber \\
\ovv &= \frac{1-16 \tau}{8(1+3 \beta)}, \quad q = \ovr - (1+ 3 \beta) \ovv = \zeta/8 + 2 \tau.
\end{align}
$\rho$ is obviously suitable for Condition \ref{Cond1}\eqref{RhoCond}, and in particular $p > 2 \vee (1+3r)$ is clearly satisfied. Condition \ref{Cond1}\eqref{ObsSchCondit} is established since 
$$\frac{1}{2(p-r)} < \ovw < \frac{1}{2} \wedge \frac{1}{4r}$$
holds due to $p > 4 + \beta$, and further simple calculations show
\begin{align*}
1 < 1 + 2 \ovr - 2 \ovw &< t_2^{-1} = 1+ \zeta \\ 
            &< 1+ \tau = t_1^{-1} < (2 p \ovv - 1) \wedge (1+ \frac{q}{2}) \wedge (1+2 \ovw) < 2- 2 r \ovw (1 + \epsilon),
\end{align*}
with $\epsilon = \frac{3- \beta}{2}$. Therefore, all conditions on the observation scheme are satisfied. 

Additionally, we have
$$h(x) (1 \wedge |x|^{r+\delta}) \leq K|x|^{-(1- \delta)}$$
on a neighbourhood of zero for each $\delta >0$. Therefore and due to Condition \ref{EasierCond}\eqref{DiCondmiddle} and \eqref{DiCondinfty} we have $\int \big ( 1 \wedge |x|^{r+ \delta} \big ) \nu(dx) < \infty$ for every $\delta >0$. Again conditions \ref{EasierCond}\eqref{DiCondmiddle} and \eqref{DiCondinfty} prove $\int |x|^{p-1} \mathtt 1_{\lbrace |x| \geq 1 \rbrace} \nu(dx) < \infty$ which is Condition \ref{Cond1}\eqref{LevMeaMomCond}. 

With the constants given above we obtain for $\tilde \ell$ defined in \eqref{Tildeelldef}
\begin{align*}
\tilde \ell = \epsilon /2 = \frac{3- \beta}{4}
\end{align*}
and therefore with $\ell = 1+ \tilde \ell$
$$\frac{1+ 2 \ovw}{\frac{1}{2} - \ovw} = 10/3 < 4 \quad \text{ and } \quad \frac{2+ r \ell}{\ell -1} = \frac{8 +7 \beta - \beta^2}{3 - \beta}  \in (\frac{8}{3},18).$$
Thus Condition \ref{EasierCond}\eqref{DriDiffMomCond} yields Condition \ref{Cond1}\eqref{DrianDiffCond}. 

We are thus left with proving Condition \ref{Cond1}\eqref{FiLevyDistCond} and \eqref{SeLevyDistCond}. Obviously, $0 < \ovv < \ovw < \ovr$ holds with the choice in \eqref{KonstDefEq}. First we verify Condition \ref{Cond1}\eqref{FiLevyDistCond}. To this end we choose $\alpha_0 >0$ such that $h(x) \leq K |x|^{-(1+\beta)}$ on $[- \alpha_0, \alpha_0] \setminus \lbrace 0 \rbrace$. Now we compute for $n \in \N$ large enough:
\begin{align*}
\int &\int \mathtt 1_{\lbrace |u-z| \leq \Delta_n^{\ovr} \rbrace} \mathtt 1_{\lbrace \Delta_n^{\ovv} /2 < |u| \leq \alpha_0 \rbrace } 
                  \mathtt 1_{\lbrace \Delta_n^{\ovv} /2 < |z| \leq \alpha_0 \rbrace} \nu(dz) \nu(du)  \\
  &\leq K \int \int \mathtt 1_{\lbrace |u-z| \leq \Delta_n^{\ovr} \rbrace} \mathtt 1_{\lbrace \Delta_n^{\ovv} /2 < |u| \leq \alpha_0 \rbrace } 
                  \mathtt 1_{\lbrace \Delta_n^{\ovv} /2 < |z| \leq \alpha_0 \rbrace} |z|^{-(1+\beta)} |u|^{-(1+ \beta)} dz du  \\
&\leq 2 K\int \limits_0^{\infty} \int \limits_0^{\infty} \mathtt 1_{\lbrace |u-z| \leq \Delta_n^{\ovr} \rbrace} 
                \mathtt 1_{\lbrace \Delta_n^{\ovv}/2 < u \leq \alpha_0 \rbrace } \mathtt 1_{\lbrace \Delta_n^{\ovv}/2 < z \leq \alpha_0 
								\rbrace} z^{-(1+\beta)} u^{-(1+\beta)} dz du. 
\end{align*}
For the second inequality we have used symmetry of the integrand as well as $\Delta_n^{\ovr} < \Delta_n^{\ovv}/2$. In the following, we ignore the extra condition on $u$. Evaluation of the integral with respect to $u$ plus a Taylor expansion give the further upper bounds
\begin{align}
\label{FracIntEq}
  &K \int \limits_0^{\infty} \frac{|(z- \Delta_n^{\ovr})^{\beta} - (z+ \Delta_n^{\ovr})^{\beta}|}
                     {|z^2 - \Delta_n^{2 \ovr} |^{\beta}} 
										z^{-(1+ \beta)} \mathtt 1_{\lbrace \Delta_n^{\ovv}/2 < z \leq \alpha_0 \rbrace} dz \nonumber \\
    &\leq K \Delta_n^{\ovr} \int \limits_0^{\infty} \frac{\xi(z)^{\beta-1}}{|z^2 - \Delta_n^{2 \ovr}|^{\beta}} 
                      z^{-(1+ \beta)} \mathtt 1_{\lbrace \Delta_n^{\ovv}/2 < z \leq \alpha_0 \rbrace} dz
\end{align}
for some $\xi(z) \in [z- \Delta_n^{\ovr}, z + \Delta_n^{\ovr}]$. Finally, we distinguish the cases $\beta < 1$ and $\beta \geq 1$ for which the numerator has to be treated differently, depending on whether it is bounded or not. The denominator is always smallest if we plug in $\Delta_n^{\ovv}/2$ for $z$. Overall,
\begin{align*}
\int &\int \mathtt 1_{\lbrace |u-z| \leq \Delta_n^{\ovr} \rbrace} \mathtt 1_{\lbrace \Delta_n^{\ovv} /2 < |u| \leq \alpha_0 \rbrace } 
                  \mathtt 1_{\lbrace \Delta_n^{\ovv} /2 < |z| \leq \alpha_0 \rbrace} \nu(dz) \nu(du) \nonumber \\
     &\leq \begin{cases}
                       K \Delta_n^{\ovr} \Delta_n^{-(1+\beta) \ovv} \int \limits_{\Delta_n^{\ovv}/2}^{\alpha_0} 
											 z^{-(1+ \beta)} dz, \quad \text{ if } \beta <1 \\
											 K \Delta_n^{\ovr} \Delta_n^{-2\beta \ovv} \int \limits_{\Delta_n^{\ovv}/2}^{\alpha_0} 
											 z^{-(1+ \beta)} dz, \quad \text{ if } \beta \geq 1
											\end{cases} \\
&\leq K \Delta_n^{\ovr - (1+ 3 \beta)\ovv} = K \Delta_n^q \nonumber.
\end{align*}

Finally, we consider Condition \ref{Cond1}\eqref{SeLevyDistCond}, for which we proceed similarly with $n \in \N$ large enough and $\alpha >0$ arbitrary:
\begin{eqnarray*}
\lefteqn{\int \int \mathtt 1_{\lbrace |u-z| \leq \Delta_n^{\ovr} \rbrace} \mathtt 1_{\lbrace |u| > \alpha \rbrace } 
                  \mathtt 1_{\lbrace  |z| > \alpha \rbrace} \nu(dz) \nu(du) }& & \nonumber \\
&\leq& O(\Delta_n^{\ovr}) + 2 K \int \limits_{M^{\prime}}^{\infty} \int \limits_{M^{\prime}}^{\infty} 
                  \mathtt 1_{\lbrace |u-z| \leq \Delta_n^{\ovr} 
                  \rbrace} \mathtt 1_{\lbrace u > \alpha \rbrace } \mathtt 1_{\lbrace z > \alpha \rbrace} 
									z^{-4} u^{-4} dz du.
\end{eqnarray*}
This inequality holds with a suitable $M^{\prime} >0$ due to Condition \ref{EasierCond} \eqref{DiCondmiddle} and \eqref{DiCondinfty}, as we have $h(x) \leq K|x|^{-4}$ for large $|x|$ from $p>4$. Therefore,
\begin{align}
\label{IntalphaAbsch}
\int &\int \mathtt 1_{\lbrace |u-z| \leq \Delta_n^{\ovr} \rbrace} \mathtt 1_{\lbrace |u| > \alpha \rbrace } 
                  \mathtt 1_{\lbrace  |z| > \alpha \rbrace} \nu(dz) \nu(du)  \nonumber \\
	   &\leq O(\Delta_n^{\ovr}) + K \int \limits_{M^{\prime}}^{\infty} \frac{|(z- \Delta_n^{\ovr})^{3} - 
       (z+ \Delta_n^{\ovr})^{3}|} {|z^2 - \Delta_n^{2 \ovr} |^{3}} z^{-4} 
			 \mathtt 1_{\lbrace z > \alpha \rbrace} dz \nonumber \\
     &\leq O(\Delta_n^{\ovr}) + K \Delta_n^{\ovr} \int \limits_{M^{\prime} \vee \alpha}^{\infty} 
                        \frac{(\xi(z))^2} {|z^2 - \Delta_n^{2 \ovr} |^{3}} z^{-4} dz = o(\Delta_n^q),
\end{align}
using another Taylor expansion as in \eqref{FracIntEq} with a $\xi(z) \in [z- \Delta_n^{\ovr} , z + \Delta_n^{\ovr}]$. The final bound in \eqref{IntalphaAbsch} holds since the last integral is finite.
\end{proof}

Let us now proceed with a proof of the results from Section \ref{sec:converge}. We begin with the results in order to establish Lemma \ref{step1}. \\

\textbf{Proof of Proposition \ref{BiasAbschProp}.}
With the notation $\overline F_n = \lbrace x \colon |x| > v_n \rbrace$ we have $L^{(n)} = x \mathtt 1_{\overline F_n} (x) \star \mu$. These processes are compound Poisson processes and possess the representation
$$L^{(n)}_t = \sum \limits_{i=1}^{\overline N^n_t} Y_i^{(n)},$$
where $\overline N^n$ is a Poisson process with parameter $\nu(\overline F_n)$ and $(Y_i^{(n)})_{i \in \mathbb N}$ is an i.i.d. sequence of random variables with distribution $1/\nu(\overline F_n) \times \nu \vert_{\overline F_n}$ which is independent of $\overline N^n$.

Now consider the sets $A_n = \left \{ \overline N^n_{\Delta_n} \leq 1 \right \}$. According to Condition \ref{Cond1}\eqref{BlGetCond} we have $\int \big ( 1 \wedge |x|^{r+\delta} \big ) \nu(dx) < \infty$ for each $\delta >0$. Thus there is a constant $K(\delta) >0$ such that $\nu(\overline F_n) \leq K(\delta) v_n^{-r-\delta}$. Consequently, 
\begin{eqnarray}
\label{JumProbAbsch}
\Prob(A_n^C) \leq \Delta_n^2 (\nu(\overline F_n))^2 \leq K(\delta) \Delta_n^{2-2(r+ \delta) \ovw} = O(\Delta_n^{3/2 - \epsilon}),
\end{eqnarray}
where the final equality holds for each $\epsilon >0$ as soon as $\delta > 0$ is small enough due to $\ovw < \frac{1}{4r}$. Now we obtain
\begin{align*}
\frac{1}{\Delta_n} &\Eb \left\{ f(L^{(n)}_{\Delta_n}) \Indit (L^{(n)}_{\Delta_n}) \right\} - N_f(t) \nonumber \\
&= \frac{1}{\Delta_n} \left\{ \int \limits_{A_n^C} f(L^{(n)}_{\Delta_n}) \Indit (L^{(n)}_{\Delta_n}) \Prob(d \omega) +
\int \limits_{A_n} f(L^{(n)}_{\Delta_n}) \Indit (L^{(n)}_{\Delta_n}) \Prob(d \omega) \right\} - \nonumber \\ 
&- \int f(x) \Indit(x) \nu(dx)  \nonumber \\
&= O(\Delta_n^{\ovw}) + \frac{1}{\Delta_n} \int \limits_{\lbrace \overline N^n_{\Delta_n} = 1 \rbrace} f(Y^{(n)}_1) \Indit(Y^{(n)}_1) \Prob(d \omega) \nonumber \\
&- \int f(x) \Indit(x) \nu(dx) 
\end{align*}
where the $O$-term is uniform in $t \in \overline \R$. In the final equality above we have bounded the first term in the curly brackets using \eqref{JumProbAbsch} as well as $\ovw < 1/2$. Using the properties of a compound Poisson process we obtain 
\begin{align*}
\frac{1}{\Delta_n} &\Eb \left\{ f(L^{(n)}_{\Delta_n}) \Indit (L^{(n)}_{\Delta_n}) \right\} - N_f(t) \\
&= e^{- \Delta_n \nu( \overline F_n)} \int \limits_{\overline F_n} f(x) \Indit(x) \nu(dx) - \int f(x) \Indit(x) \nu(dx) + O(\Delta_n^{\ovw})
\end{align*}
with a uniform $O$-term. Since we have $|1 - e^{-x}| \leq |x|$ for $x \geq 0$ and as $f$ is integrable with respect to $\nu$, we conclude
\begin{align}
\label{LastOAbschEqn}
\sup \limits_{t \in \overline \R} &\left| \frac{1}{\Delta_n} \Eb \left\{ f(L^{(n)}_{\Delta_n}) \Indit (L^{(n)}_{\Delta_n}) \right\} - N_f(t) \right| \nonumber \\
&\leq O(\Delta_n^{\ovw}) + K \Delta_n \nu(\overline F_n) + \int |f(x)| \mathtt 1_{\lbrace |x| \leq v_n \rbrace} \nu(dx) \nonumber \\
&\leq O(\Delta_n^{\ovw}) + K \Delta_n^{1/2} + K v_n^{p-r-\delta} \int \big ( 1 \wedge |x|^{r+ \delta} \big ) \nu(dx) = O(\Delta_n^{\ovw}),
\end{align}
where the last inequality holds for two reasons: First, for each $\delta >0$ and $n \in \N$ large enough we use $|f(x)| \leq K |x|^p$ on $\lbrace |x| \leq v_n \rbrace$, which is possible due to $f(x) = O(|x|^p)$ as $|x| \rightarrow 0$. Second, $1 - (r + \delta) \ovw \geq 3/4 - \epsilon \geq 1/2 > \ovw$ from $\ovw < \frac{1}{4r}$. For the final equality in \eqref{LastOAbschEqn} observe that $p-r- \delta >1$ for $\delta > 0$ small enough and $v_n = \gamma \Delta_n^{\ovw}$ as well as Condition \ref{Cond1}\eqref{BlGetCond}.
\qed

\medskip

\textbf{Proof of Proposition \ref{LevyCLTProp}.} The processes $Y^{(n)}_f$ have the form
$$Y^{(n)}_f (\omega;t) = \sum \limits_{i=1}^{m_n} \left\{ g_{ni}(\omega;t) - \Eb (g_{ni}(\cdot;t)) \right\},$$
with $m_n = n$ and the triangular array $\lbrace g_{ni}(\omega;t) \mid n \in \N; i = 1, \ldots, n ; t \in \R \rbrace$ of processes
$$g_{ni}(\omega ; t) = \frac{1}{\sqrt{n \Delta_n}} f(\Deli L^{(n)}(\omega)) \Indit( \Deli L^{(n)}(\omega)),$$
which is obviously independent within rows. Thus by Theorem 11.16 in \cite{Kos08} the proof is complete, if we can show the following six conditions of the triangular array $\lbrace g_{ni} \rbrace$ (see for instance \cite{Kos08} for the notions of AMS and manageability):
\begin{enumerate}[(A)]
\item $\lbrace g_{ni} \rbrace$ is almost measurable Suslin (AMS);
      \label{Liste1}
\item $\lbrace g_{ni} \rbrace$ is manageable with envelopes $\lbrace G_{ni} \mid n \in \N; i = 1, \ldots,n \rbrace$, given through
      $G_{ni} = \frac{K}{\sqrt{n \Delta_n}} (1 \wedge \left| \Deli L^{(n)} \right|^{p})$ with $K >0$ such that 
			$|f(x)| \leq K(1 \wedge |x|^{p})$, where $\lbrace G_{ni} \rbrace$ are also independent within rows;
			\label{Liste2}
\item $$H_f(u,v) = \lim \limits_{n \rightarrow \infty} \Eb \left\{ Y^{(n)}_f(u) Y^{(n)}_f(v) \right\}$$ for all $u,v \in \R$; \label{Liste3} 
\item $\limsup \limits_{n \rightarrow \infty} \sum \limits_{i=1}^n \Eb G_{ni}^2 < \infty$; \label{Liste4}
\item $$\lim \limits_{n \rightarrow \infty} \sum \limits_{i=1}^n \Eb G_{ni}^2 \mathtt 1_{\lbrace G_{ni} > \epsilon \rbrace} =0$$ for each $\epsilon >0$;
			\label{Liste5}
\item For $u,v \in \R$ the limit $d_f(u,v) = \lim \limits_{n \rightarrow \infty} d_f^{(n)}(u,v)$ with
      $$d_f^{(n)}(u,v) = \left\{ \sum \limits_{i=1}^n \Eb \left| g_{ni}(\cdot; u) - g_{ni}( \cdot;v) \right|^2 \right\}^{1/2}$$
			exists, and for all deterministic sequences $(u_n)_{n \in \N},(v_n)_{n \in \N} \subset \R$ with $d_f(u_n,v_n) \rightarrow 0$ we also have $d_f^{(n)}(u_n,v_n) \rightarrow 0$.
			\label{Liste6}
\end{enumerate}

\smallskip

\textit{Proof of  \eqref{Liste1}.} With Lemma 11.15 in \cite{Kos08} the triangular array $\lbrace g_{ni} \rbrace$ is AMS if it is separable, that is for each 
$n \in \N$ there exists a countable subset $S_n \subset \R$ such that
$$\Prob^{\ast} \bigg( \sup \limits_{t_1 \in \R} \inf \limits_{t_2 \in S_n} \sum \limits_{i=1}^n (g_{ni}(\omega; t_1) - 
g_{ni}(\omega;t_2))^2 >0 \bigg) = 0.$$
But if we choose $S_n = \Q$ for all $n \in \N$, we obtain
$$\sup \limits_{t_1 \in \R} \inf \limits_{t_2 \in S_n} \sum \limits_{i=1}^n (g_{ni}(\omega; t_1) - 
g_{ni}(\omega;t_2))^2 = 0$$
for each $\omega \in \Omega$ and $n \in \N$.

\smallskip

\textit{Proof of  \eqref{Liste2}.}
$G_{ni}$ are independent within rows since $L^{(n)}$ are L\'evy processes. 

In order to show manageability consider for $n \in \N$ and $\omega \in \Omega$ the set
\begin{multline*}
\mathcal G_{n \omega} = \bigg\{ \bigg( \frac{1}{\sqrt{n \Delta_n}} f(\Delta_1^n L^{(n)}(\omega)) \Indit(\Delta_1^n L^{(n)}(\omega)), \ldots \\ 
\ldots, \frac{1}{\sqrt{n \Delta_n}} f(\Delta_n^n L^{(n)}(\omega)) \Indit(\Delta_n^n L^{(n)}(\omega)) \bigg) \bigg{|} t \in \R \bigg\} \subset \R^n.
\end{multline*}
These sets are bounded with envelope vector
$$G_n(\omega) = (G_{n1}(\omega) , \ldots, G_{nn}(\omega)) \in \R^n.$$
For $i_1, i_2 \in \lbrace 1, \ldots, n \rbrace$ the projection 
\begin{multline*}
p_{i_1, i_2}(\mathcal G_{n \omega}) = \bigg \{ \bigg( \frac{1}{\sqrt{n \Delta_n}} f(\Delta_{i_1}^n L^{(n)}(\omega)) \Indit(\Delta_{i_1}^n L^{(n)}(\omega)), \\
 \frac{1}{\sqrt{n \Delta_n}} f(\Delta_{i_2}^n L^{(n)}(\omega)) \Indit(\Delta_{i_2}^n L^{(n)}(\omega)) \bigg) \mid t \in \R \bigg \} \subset \R^2
\end{multline*}
onto the $i_1$-th and the $i_2$-th coordinate is an element of the set
\begin{align*}
\bigg\{ \lbrace (0,0) \rbrace, &\lbrace (0,0), (s_{i_1,n}(\omega),0) \rbrace, \lbrace (0,0), (0,s_{i_2,n}(\omega)) \rbrace, 
\lbrace (0,0), (s_{i_1,n}(\omega), s_{i_2,n}(\omega)) \rbrace, \\
\lbrace (0,0), &(s_{i_1,n}(\omega), 0), (s_{i_1,n}(\omega), s_{i_2,n}(\omega)) \rbrace,
\lbrace (0,0), (0, s_{i_2,n}(\omega)), (s_{i_1,n}(\omega), s_{i_2,n}(\omega)) \rbrace \bigg\}.
\end{align*}
with $s_{i_1,n}(\omega) = \frac{1}{\sqrt{n \Delta_n}} f(\Delta_{i_1}^n L^{(n)}(\omega))$ and 
$s_{i_2,n}(\omega) = \frac{1}{\sqrt{n \Delta_n}} f(\Delta_{i_2}^n L^{(n)}(\omega))$. 

Consequently, in the sense of Definition 4.2 in \cite{Pol90}, for every $s \in \R^2$ no proper coordinate projection of $\mathcal G_{n \omega}$ can surround $s$ and therefore $\mathcal G_{n \omega}$ has a pseudo dimension of at most $1$ (Definition 4.3 in \cite{Pol90}). Thus by Corollary 4.10 in the same reference, there exist constants $A$ and $W$ which depend only on the pseudo dimension such that
$$D_2(x \| \alpha \odot G_n(\omega) \|_2, \alpha \odot \mathcal G_{n \omega}) \leq A x^{-W} =: \lambda(x),$$
for all $0< x \leq 1$, $n \in \N$, $\omega \in \Omega$ and each rescaling vector $\alpha \in \R^n$ with non-negative entries, where 
$\| \cdot \|_2$ denotes the Euclidean distance on $\R^n$, $D_2$ denotes the packing number with respect to the Euclidean distance and 
$\odot$ denotes coordinate-wise multiplication. 

Obviously, we have 
$$\int \limits_0^1 \sqrt{\log \lambda(x)} dx < \infty,$$ 
and therefore the triangular array $\lbrace g_{ni} \rbrace$ is indeed manageable with envelopes $\lbrace G_{ni} \rbrace$.
 
\smallskip

\textit{Proof of  \eqref{Liste3}.} Using the independence within rows of the triangular array $\lbrace g_{ni} \rbrace$ we calculate for $u,v \in \R$ as follows:
\begin{align*}
\Eb &\left\{ Y^{(n)}_f(u) Y^{(n)}_f(v) \right\} = \frac{1}{n \Delta_n} \sum \limits_{i=1}^n \Eb \big [ f^2(\Deli L^{(n)}) 
                \mathtt 1_{(- \infty, u \wedge v]}( \Deli L^{(n)}) \big ] - \\
					&- \frac{1}{n \Delta_n} \sum \limits_{i=1}^n \bigg ( \Eb \big [ f(\Deli L^{(n)}) 
					     \mathtt 1_{(- \infty, u]}(\Deli L^{(n)}) \big ] \Eb \big [ f(\Deli L^{(n)}) \mathtt 1_{(- \infty, v]}(\Deli L^{(n)}) 
							\big ] \bigg ) \\
					&= \frac{1}{\Delta_n} \Eb \left\{ f^2(L^{(n)}_{\Delta_n}) \mathtt 1_{(- \infty, u \wedge v]}(L^{(n)}_{\Delta_n}) 
					   \right\}- \\
					&- \Delta_n \bigg ( \frac{1}{\Delta_n} \Eb \left\{ f(L^{(n)}_{\Delta_n}) \mathtt 1_{(- \infty, u ]}(L^{(n)}_{\Delta_n}) \right\} 
					   \bigg )
					   \bigg ( \frac{1}{\Delta_n} \Eb \left\{ f(L^{(n)}_{\Delta_n}) \mathtt 1_{(- \infty, v]}(L^{(n)}_{\Delta_n}) \right\} \bigg) \\
					&\rightarrow \int f^2(x) \mathtt 1_{(- \infty, u \wedge v]}(x) \nu(dx) = H_f(u,v).
\end{align*}
The equality holds because $\lbrace g_{ni}(t) \rbrace$ are also identically distributed within rows and the convergence follows with Proposition \ref{BiasAbschProp}.

\smallskip

\textit{Proof of  \eqref{Liste4}.}
 Because $L^{(n)}$ are L\'evy processes we obtain
\begin{align*}
\limsup \limits_{n \rightarrow \infty} \sum \limits_{i=1}^n \Eb G_{ni}^2 &= 
\limsup \limits_{n \rightarrow \infty} K^2 \frac{1}{\Delta_n} \Eb \left\{ 1 \wedge \left| L^{(n)}_{\Delta_n} \right|^{2 p} \right\} \\
&= K^2 \int (1 \wedge |x|^{2 p}) \nu(dx) < \infty,
\end{align*}
with Proposition \ref{BiasAbschProp}, since $p>1$.

\smallskip
\textit{Proof of  \eqref{Liste5}.} 
 We have $n \Delta_n \rightarrow \infty$ and thus for $\epsilon >0$ we can choose 
$$N_{\epsilon} = \min \lbrace m \in \N \mid \frac{K}{\sqrt{n \Delta_n}} \leq \epsilon \text{ for all } n \geq m \rbrace < \infty.$$
So for $n \geq N_{\epsilon}$ the integrand satisfies $G_{ni}^2 \mathtt 1_{\lbrace G_{ni} > \epsilon \rbrace} =0$ for all $1 \leq i \leq n$ and this yields the assertion.

\smallskip
\textit{Proof of  \eqref{Liste6}.}
From Proposition \ref{BiasAbschProp} and since the $L^{(n)}$ are L\'evy processes we have
\begin{align*}
\big (d_f^{(n)}(u,v) \big)^2 &= \sum \limits_{i=1}^n \Eb \left| g_{ni}(u) - g_{ni}(v) \right|^2 \nonumber \\
                &=  \frac{1}{n \Delta_n} \sum \limits_{i=1}^n \Eb 
								    \big [ f^2(\Deli L^{(n)}) \mathtt 1_{(u \wedge v, u \vee v]}(\Deli L^{(n)}) \big ] \nonumber \\
							  &=  \frac{1}{\Delta_n} \Eb \left\{ f^2(L^{(n)}_{\Delta_n}) \mathtt 1_{(u \wedge v, u \vee v]}(L^{(n)}_{\Delta_n}) 
								    \right\} \nonumber \\ 
								&= \int f^2(x) \mathtt 1_{( u \wedge v , u \vee v]}(x) \nu(dx) + O(\Delta_n^{\ovw}) \nonumber \\
								&= \big (d_f(u,v) \big)^2 + O(\Delta_n^{\ovw}) \rightarrow \big (d_f(u,v) \big)^2
\end{align*}
for arbitrary $u,v \in \R$, where the $O$-term is uniform in $u,v \in \R$. Therefore,
$$\left| d_f^{(n)}(u,v) - d_f(u,v) \right| = O(\Delta_n^{\ovw /2})$$
uniformly as well, because
$$\left| \sqrt a - \sqrt b \right| \leq \sqrt{ |a-b|}$$
holds for arbitrary $a,b \geq 0$. This uniform convergence implies immediately that for deterministic sequences $(u_n)_{n \in \N}, (v_n)_{n \in \N} \subset \R$ with $d_f(u_n, v_n) \rightarrow 0$ we also have $d_f^{(n)}(u_n, v_n) \rightarrow 0$. 

Finally, $d_f$ is in fact a semimetric: Define for $y \in \R$ the random vectors $g_n(y) = (g_{n1}(y), \ldots, g_{nn}(y)) \in \R^n$ and apply first the triangle inequality in $\R^n$ and afterwards the Minkowski inequality to obtain
\begin{align*}
d_f^{(n)}(u,v) &= \left\{ \Eb \| g_n(u) - g_n(v) \|^2 \right \}^{1/2} \\
               &\leq \left \{ \Eb \big ( \| g_n(u) - g_n(z) \| + \| g_n(z) - g_n(v) \| \big )^2 \right\}^{1/2} \\
							 &\leq \left \{ \Eb \|g_n(u) - g_n(z) \|^2 \right \}^{1/2} + \left \{ \Eb \|g_n(z) - g_n(v) \|^2 \right \}^{1/2} \\
							 &= d_f^{(n)}(u,z) + d_f^{(n)}(z,v),
\end{align*}
for $u,v,z \in \R$ and $n \in \N$. The triangle inequality for $d_f$ follows immediately. 
\qed

\medskip 

\textbf{Proof of Lemma \ref{step1}.} Let $\alpha > 0$ be fixed and recall the definition of the processes $L^{(n)} = (x \Trunx) \star \mu$. Due to Proposition \ref{BiasAbschProp}, Proposition \ref{LevyCLTProp} and Condition \ref{Cond1}\eqref{ObsSchCond4} the processes
$$\tilde Y_{\rho_{\alpha}}^{(n)} (t) = \sqrt{n \Delta_n} \left\{ \frac{1}{n \Delta_n} \sum \limits_{i=1}^n g_t^{(\alpha)}(\Deli L^{(n)}) - N_{\rho_{\alpha}}(t) \right\}$$
converge weakly to $\Gb_{\rho_{\alpha}}$ in $\linfr$. Thus it suffices to show
\begin{eqnarray}
\label{DiffKonv}
\frac{1}{\sqrt{n \Delta_n}} \sup \limits_{t \in \R} \left| \sum \limits_{i=1}^n \left\{ g_t^{(\alpha)} (\Deli X) \Truniv - g_t^{(\alpha)}(\Deli L^{(n)}) \right\} \right| \stackrel{\Prob}{\longrightarrow} 0.
\end{eqnarray}
We proceed similarly to Step 5 in the proof of Theorem 13.1.1 in \cite{JacPro12}. Recall the constants $\ell$ and $\tilde \ell$ of \eqref{Tildeelldef} in Condition \ref{Cond1}. Then we have 
\begin{eqnarray}
\label{elleqn}
1 < \ell < \frac{1}{2r \ovw} \wedge (1+ \epsilon) \quad \text{ and also } \quad \ell < \frac{2(p-1) \ovw -1}{2(r-1) \ovw} \text{ if } r >1.
\end{eqnarray}
We set further
\begin{eqnarray}
\label{unseqDefEq}
u_n = (v_n)^{\ell} \quad \text{ and } \quad F_n = \lbrace x \colon |x| > u_n \rbrace
\end{eqnarray}
as well as
\begin{eqnarray}
\label{SmandlajumpDef}
\tibigj &=& (x \mathtt 1_{F_n}(x)) \star \mu, \nonumber \\
\tibigjal &=& (x \mathtt 1_{F_n \cap \lbrace |x| \leq \alpha/4 \rbrace }(x)) \star \mu, \nonumber \\
N_t^n &=& (\mathtt 1_{F_n} \star \mu)_t, \nonumber \\
\tilde X_t^{\prime n} &=& X_t - \tibigj  \nonumber \\
   &=& X_0 + \int \limits_0^t b_s ds + \int \limits_0^t \sigma_s dW_s \nonumber \\
	 &+& (x \mathtt 1_{F_n^C}(x)) \star 
	     (\mu - \overline \mu)_t - (x \mathtt 1_{\lbrace |x| \leq 1 \rbrace \cap F_n}(x)) \star \overline \mu_t, \nonumber \\
A_i^n	&=& \lbrace | \Deli \tilde X^{\prime n} | \leq v_n/2 \rbrace \cap \lbrace \Deli N^n \leq 1 \rbrace.
\end{eqnarray}

 Let $m \in \N$ be the integer from Condition \ref{Cond1}\eqref{DrianDiffCond}. Then by Lemma 2.1.5 in \cite{JacPro12} we obtain for $1 \leq i \leq n$ and any $0 < \delta < 1$
\begin{align*}
\Eb & \left| \Deli \big ( x \mathtt 1_{F_n^C}(x) \big ) \star \big ( \mu - \overline \mu \big ) \right|^m  \\
                    &\leq K \bigg ( \Delta_n \int \limits_{\lbrace |x| \leq u_n \rbrace} |x|^m \nu(dx) + \Delta_n^{m/2} \bigg \{
										  \int \limits_{\lbrace |x| \leq u_n \rbrace} |x|^2 \nu(dx) \bigg \}^{m/2} \bigg ) \\
										&\leq K(\delta) \big ( \Delta_n^{1+(m-r-\delta) \ell \ovw} + \Delta_n^{m/2} \big ).
\end{align*}
Furthermore, $\overline \mu (ds,dx) = ds \otimes \nu(dx)$ yields for $1 \leq i \leq n$ and arbitrary $\delta >0$
\begin{align*}
\left| \Deli \big ( x \mathtt 1_{\lbrace |x| \leq 1 \rbrace \cap F_n}(x) \star \overline \mu \big ) \right| &= \Delta_n \bigg |
              \int \limits_{\lbrace u_n < |x| \leq 1 \rbrace} x \nu(dx) \bigg | \\
					&\leq \Delta_n u_n^{-(r+ \delta -1)_+} \int \limits_{\lbrace u_n < |x| \leq 1 \rbrace} |x|^{r+ \delta} \nu(dx) \\
					& \leq K(\delta) \Delta_n^{1 - \ell \ovw (r+ \delta -1)_+}.
\end{align*}
Moreover, from an application of H\"older's inequality and the Burkholder-Davis-Gundy inequalities (equation (2.1.32) on page 39 in \cite{JacPro12}) we obtain with $A$ being the upper bound of the coefficients in Condition \ref{Cond1}\eqref{DrianDiffCond} for $1 \leq i \leq n$:
\begin{align*}
\Eb \bigg | \int \limits_{(i-1) \Delta_n}^{i \Delta_n} b_s ds \bigg |^m \leq \Delta_n^m \Eb \bigg ( \frac{1}{\Delta_n} \int \limits_{(i-1) \Delta_n}^{i \Delta_n} \left| b_s \right|^m ds \bigg ) \leq \Delta_n^m \Eb A^m \leq K \Delta_n^m 
\end{align*}
and
\begin{align*}
\Eb \bigg | \int \limits_{(i-1) \Delta_n}^{i \Delta_n} \sigma_s dW_s \bigg |^m &\leq K \Delta_n^{m/2} \Eb 
     \bigg ( \frac{1}{\Delta_n} \int \limits_{(i-1) \Delta_n}^{i \Delta_n} \left| \sigma_s \right|^2 ds \bigg )^{m/2} \\
		&\leq K \Delta_n^{m/2} \Eb A^m \leq K \Delta_n^{m/2}.
\end{align*}
Additionally, $N^n$ is a Poisson process with parameter $\nu(F_n) \leq K(\delta) /u_n^{r+ \delta}$ for each $\delta >0$. Therefore, for any $1 \leq i \leq n$ and some $K(\delta)$ we have
$$\Prob(\Deli N^n \geq 2) \leq K(\delta) \Delta_n^{2-2(r + \delta) \ell \ovw}.$$
Let us now choose $\delta > 0$ in such a way that $1- \ell \ovw(r+ \delta - 1)_+ > \ovw$. Then, for $n$ large enough we have $\Delta_n^{1- \ell \ovw(r+ \delta - 1)_+} \leq K v_n$, and Markov inequality gives 
\begin{align}
\label{ProbAbscheqn}
\sum \limits_{i=1}^n \Prob ((A_i^n)^C) &\leq K(\delta) n \big \{ \Delta_n^{2 - 2(r+\delta) \ell \ovw} + \Delta_n^{1+(m-r- \delta) \ell \ovw - m\ovw} \nonumber \\
  &+ \Delta_n^{m/2 - m \ovw}  + \Delta_n^{m - m \ovw} \big \}.
\end{align}
From the choice of the constants we further have
\begin{align*}
2-2(r+ \delta) \ell \ovw \geq 2-2r (1+ \epsilon) \ovw
\end{align*}
and
\begin{align*}
\big (1+(m-r- \delta) \ell \ovw - m \ovw \big ) &\wedge \big ( m/2 - m \ovw \big ) \geq 1+ 2 \ovw,
\end{align*}
again for $\delta >0$ small enough. Thus the right hand side of \eqref{ProbAbscheqn} converges to zero for this choice of $\delta$, using Condition \ref{Cond1}\eqref{ObsSchCond4} and \eqref{ObsSchCond6}. 

Consequently, we have $\Prob(B_n) \rightarrow 1$ for the sets 
\begin{eqnarray}
\label{BnsetsDef}
B_n = \bigcap \limits_{i=1}^n A_i^n.
\end{eqnarray}
On $B_n$, and with $n$ large enough such that $v_n \leq \alpha/4$, one of the following mutually exclusive possibilities holds for $1 \leq i \leq n$:
\begin{itemize}
\item[(i)] $\Deli N^n =0$. \\
      Then we have $\left| \Deli X \right| = \left| \Deli \tilde X^{\prime n} \right| \leq v_n /2$ and there is no jump larger than 
			$v_n$ on the interval $((i-1) \Delta_n, i \Delta_n]$. Thus $g_t^{(\alpha)} (\Deli X) \Truniv =0 = g_t^{(\alpha)}(\Deli L^{(n)})$ holds for all $t \in \R$ and the summand in 
			\eqref{DiffKonv} vanishes.
\item[(ii)] $\Deli N^n =1$ and $\Deli \tibigj = \Deli \tibigjal \neq 0$. \\
      So the only jump in $((i-1) \Delta_n, i \Delta_n]$ (of absolute size) larger than $v_n$ is in fact not larger than $\alpha /4$, and because of $v_n \leq \alpha /4$
			we have $|\Deli X | \leq \alpha /2$. Thus, as in the first case, $g_t^{(\alpha)} (\Deli X) \Truniv =0 = g_t^{(\alpha)}(\Deli L^{(n)})$ is true for all $t \in \R$ and the
			summand in \eqref{DiffKonv} is equal to zero.
\item[(iii)]$\Deli N^n =1$ and $\Deli \tibigj \neq 0$, but $\Deli \tibigjal =0$. \\
      So the only jump in $((i-1) \Delta_n, i \Delta_n]$ larger than $v_n$ is also larger than $\alpha /4$. If we define $\hatbigjal = (x \mathtt 1_{\lbrace |x| > \alpha/4 \rbrace}) \star \mu$, we get
			\begin{eqnarray*}
			\Deli X &=& \Deli \tilde X^{\prime n} + \Deli \hatbigjal \quad \text{ and } \\ 
			g_t^{(\alpha)}(\Deli L^{(n)}) &=& \rho_{\alpha}(\Deli \hatbigjal) \mathtt 1_{\lbrace | \Deli \hatbigjal | > v_n \rbrace} 
			                                  \Indit(\Deli \hatbigjal)
			\end{eqnarray*}
\end{itemize}

Now obtain an upper bound for the term in \eqref{DiffKonv} on $B_n$, as soon as $v_n \leq \alpha/4$:
\begin{align*}
\frac{1}{\sqrt{n \Delta_n}} &\sup \limits_{t \in \R}  \left| \sum \limits_{i=1}^n \left\{ g_t^{(\alpha)} (\Deli X) \Truniv - 
             g_t^{(\alpha)}(\Deli L^{(n)}) \right\} \right| \\
\leq & \frac{1}{\sqrt{n \Delta_n}} \sup \limits_{t \in \R} \big| \sum \limits_{i=1}^n \bigg\{ \rho_{\alpha}(\Deli X) 
       \mathtt 1_{\lbrace | \Deli X| > v_n \rbrace} \Indit(\Deli X) - \rho_{\alpha}(\Deli \hatbigjal) \times \\
			&\times \mathtt 1_{\lbrace | \Deli \hatbigjal | > v_n \rbrace} \Indit(\Deli \hatbigjal) \bigg\} \mathtt 1_{\lbrace | \Deli \hatbigjal | > 
			 \alpha/4 \rbrace} \mathtt 1_{\lbrace | \Deli \tilde X^{\prime n} | \leq v_n/2 \rbrace} \big| \\
\leq & C_n + D_n,
\end{align*}
where we can substitute $\Deli X = \Deli \tilde X^{\prime n} + \Deli \hatbigjal$ in the second line and with
\begin{multline*}
C_n = \frac{K}{\sqrt{n \Delta_n}} \sup \limits_{t \in \R} \sum \limits_{i=1}^n \left| \Indit(\Deli \tilde X^{\prime n} + 
                                \Deli \hatbigjal) - \Indit(\Deli \hatbigjal) \right|  \times \\
																\times \mathtt 1_{\lbrace | \Deli \hatbigjal | > \alpha/4 \rbrace} 
																\mathtt 1_{\lbrace | \Deli \tilde X^{\prime n} | \leq v_n/2 \rbrace}
\end{multline*}
and
\begin{multline*}
D_n = \frac{1}{\sqrt{n \Delta_n}} \sum \limits_{i=1}^n \big| \rho_{\alpha} (\Deli \tilde X^{\prime n} + \Deli \hatbigjal)
                            \mathtt 1_{\lbrace |  \Deli \tilde X^{\prime n} + \Deli \hatbigjal| > v_n \rbrace} - \\
														- \rho_{\alpha}(\Deli \hatbigjal) \mathtt 1_{\lbrace | \Deli \hatbigjal| > v_n \rbrace} \big| 
														  \mathtt 1_{\lbrace | \Deli \tilde X^{\prime n} | \leq v_n/2 \rbrace}.
\end{multline*}
Here, $K>0$ denotes an upper bound for $\rho$. Because of $\Prob(B_n) \rightarrow 1$ it is enough to show $C_n \stackrel{\Prob}{\rightarrow} 0$ and $D_n \stackrel{\Prob}{\rightarrow} 0$
 in order to verify \eqref{DiffKonv} and to complete the proof of Lemma \ref{step1}. 

First we consider $D_n$. Let $g$ be either $\rho_{\alpha}$ or $\rho_{\alpha}^{\prime}$. Then there exists a constant $K >0$ which depends only on $\alpha$, such that we have for $x,z \in \R$ and $v>0$:
\begin{multline}
\label{ralralprdiabs}
\big| g(x+z) \mathtt 1_{\lbrace | x+z | >v \rbrace} - g(x) \mathtt 1_{\lbrace |x| > v \rbrace} \big| 
                                            \mathtt 1_{\lbrace |z| \leq v/2 \rbrace} \\
													\leq K (|x|^p \mathtt 1_{\lbrace |x| \leq 2v \rbrace} + |x|^{p-1} |z| \mathtt 1_{\lbrace |z| \leq v/2 \rbrace}).
\end{multline}
Note that for $|x+z| >v$ and $|x| >v$ we use the mean value theorem and $|z| \leq |x|$ as well as $\left| \frac{dg}{dx}(x) \right| \leq K |x|^{p-1}$ for all $x \in \R$ by the assumptions on $\rho$. In all other cases in which the left hand side does not vanish we have $|z| \leq |x| \leq 2v$ as well as $\left| g(x) \right| \leq K |x|^p$ for all $x \in \R$ by another application of the mean value theorem and the assumptions on $\rho$. 

Thus $D_n \stackrel{\Prob}{\rightarrow} 0$ holds, if we can show
\begin{align*} 
a_n = \frac{1}{\sqrt{n \Delta_n}} \sum \limits_{i=1}^n \Eb \left\{ \left| \Deli \hatbigjal \right|^p 
                        \mathtt 1_{\lbrace | \Deli \hatbigjal| \leq 2 v_n \rbrace} \right\} \rightarrow 0
\end{align*}
and
\begin{align*}
b_n = \frac{v_n}{2 \sqrt{n \Delta_n}} \sum \limits_{i=1}^n \Eb \big| \Deli \hatbigjal \big|^{p-1} \rightarrow 0. 
\end{align*}
For $y \in \R_+$ set
$$\widehat \delta_{\alpha}(y) = \int | x |^y \mathtt 1_{\lbrace |x| > \alpha/4 \rbrace} \nu(dx).$$
The assumptions on the L\'evy measure $\nu$ and $p> 2 \vee (1+ 3r)$ yield a constant $K >0$ with
\begin{align*}
\left\{ \widehat \delta_{\alpha} (r) \vee \widehat \delta_{\alpha}(1) \vee \widehat \delta_{\alpha}(p-1) \right\} \leq  K \quad \text{ and}  \quad |x|^p \mathtt 1_{\lbrace |x| \leq 2 v_n \rbrace} &\leq K v_n^{p-r} |x|^r.
\end{align*}
We obtain the desired result with Lemma 2.1.7 (b) in \cite{JacPro12} and Condition \ref{Cond1}\eqref{ObsSchCond4} as follows:
\begin{align*}
a_n &\leq \frac{K}{\sqrt{n \Delta_n}} n v_n^{p-r} \left\{ \Delta_n \widehat \delta_{\alpha}(r) + 
                 \Delta_n^{r \vee 1} \widehat \delta_{\alpha}(1)^{r \vee 1} \right\} = O \bigg (\sqrt{n \Delta_n^{1+ 2 \ovw (p-r)}} \bigg ) \\
     &= o \bigg ( \sqrt{n \Delta_n^{1+ 2 \ovw }} \bigg) \rightarrow 0, \\
b_n & \leq \frac{K}{\sqrt{n \Delta_n}} n v_n (\Delta_n \widehat \delta_{\alpha}(p-1) + 
           \Delta_n^{(p-1) \vee 1} \widehat \delta_{\alpha}(1)^{(p-1) \vee 1}) = O \bigg ( \sqrt{n \Delta_n^{1+ 2 \ovw}} \bigg ) \rightarrow 0.
\end{align*}
Finally, we show $C_n \stackrel{\Prob}{\rightarrow} 0$. Recall the L\'evy process of the large jumps, i.e.\
$$\hatbigjal = (x \mathtt 1_{\lbrace |x| > \alpha/4 \rbrace}) \star \mu,$$
and define for $1 \leq i,j \leq n$ with $i \neq j$ and the constant $\ovr$ of Condition \ref{Cond1}
\begin{align}
\label{RijnalDefEq}
R_{i,j}^{(n)} (\alpha) = \left\{ \big| \Deli \hatbigjal - \Delj \hatbigjal \big| \leq \Delta_n^{\ovr} \right\} \cap 
                       \left\{ \big| \Deli \hatbigjal \big| > \alpha/4 \right\} \cap B_n.
\end{align}
Let $x$ be arbitrary and either $y = 0$ or $|y| > \alpha/4$. Then, for $n$ large enough we have
$$\mathtt 1_{\lbrace |x - y| \leq \Delta_n^{\ovr} \rbrace} \mathtt 1_{\lbrace |x| > \alpha /4 \rbrace} \leq \mathtt 1_{\lbrace |x - y| \leq \Delta_n^{\ovr} \rbrace} \mathtt 1_{\lbrace |x| > \alpha /4 \rbrace} \mathtt 1_{\lbrace |y| > \alpha /4 \rbrace}.$$
Using the fact that on $B_n$ there is at most one jump of $\hatbigjal$ on an interval $((k-1) \Delta_n, k \Delta_n]$ with $1 \leq k \leq n$, we thus obtain
\begin{multline}
\label{PMRVorAbsch}
\Prob(R_{i,j}^{(n)}(\alpha)) \leq \int \int \int \int \int \mathtt 1_{\lbrace |x -y| \leq \Delta_n^{\ovr} \rbrace} 
                                \mathtt 1_{((j-1) \Delta_n, j \Delta_n]}(t) \mathtt 1_{\lbrace |y| > \alpha/4 \rbrace}  
                                \times \\
																\times \mathtt 1_{B_n}(\omega) \mu(\omega; dt,dy) \mathtt 1_{\lbrace |x| > \alpha /4 \rbrace} 
																\mathtt 1_{((i-1) \Delta_n, i \Delta_n]} (s) \mu(\omega; ds,dx) \Prob(d \omega).
\end{multline}
Now, forget about the indicator involving $B_n$ and  assume $j<i$. If $(\mathcal F_t)_{t \in \R_+}$ denotes the underlying filtration, the inner stochastic integral in \eqref{PMRVorAbsch} with respect to $\mu(\omega; dt,dy)$ is $\mathcal F_{j \Delta_n}$-measurable. Accordingly, the integrand in the stochastic integral with respect to $\mu(\omega; ds,dx)$ is in fact predictable. Fubini's theorem and the definition of the predictable compensator of an optional $\tilde{\mathcal P}$-$\sigma$-finite random measure
(Theorem II.1.8 in \cite{JacShi02}) yield for $n$ large enough:
\begin{align}
\label{PMengenRAbsch}
\Prob(R_{i,j}^{(n)}(\alpha)) \leq \Delta_n^2 \int \int \mathtt 1_{\lbrace |x-y| \leq \Delta_n^{\ovr} \rbrace} \mathtt 1_{\lbrace |x| > \alpha /4 \rbrace} \mathtt 1_{\lbrace |y| > \alpha /4 \rbrace} \nu(dy) \nu(dx).
\end{align}
We define the sets $J^{(1)}_n (\alpha)$ by their complements:
\begin{align}
\label{J1ijnalDefEq}
J^{(1)}_n(\alpha)^C = \bigcup \limits_{\stackrel{i,j =1}{i \neq j}}^n R_{i,j}^{(n)}(\alpha).
\end{align}
Then we have $\Prob (J^{(1)}_n(\alpha)) \rightarrow 1$, because \eqref{PMengenRAbsch}, Condition \ref{Cond1}\eqref{SeLevyDistCond} and Condition \ref{Cond1}\eqref{ObsSchCond3} show that there is a constant $K >0$ such that 
$$\Prob(J^{(1)}_n(\alpha)^C) \leq K n^2 \Delta_n^{2+q} \rightarrow 0.$$
So in order to obtain $C_n \stackrel{\Prob}{\rightarrow} 0$ we may assume that for $1 \leq i,j \leq n$ with $i \neq j$ 
$$\left| \Deli \hatbigjal - \Delj \hatbigjal \right| > \Delta_n^{\ovr}$$
is satisfied. But then for each $t \in \R$ at most $v_n / \Delta_n^{\ovr}$ summands in the sum of the definition of $C_n$ can be equal to $1$. We conclude
$$C_n = O \bigg (1/ \sqrt{n \Delta_n^{1+ 2(\ovr - \ovw)}} \bigg ) \rightarrow 0,$$
on $J^{(1)}_n(\alpha)$ by Condition \ref{Cond1}\eqref{ObsSchCond7}. \qed 

\medskip

\textbf{Proof of Proposition \ref{GalKonvLem}} Remark \ref{RescBMRem} reveals that the processes $\Gb_{f_n}$ are indeed the rescaled Brownian motions $\Bb_{f_n}(t) = \Bb(\int f_n^2(x) \mathtt 1_{(- \infty, t]}(x) \nu(dx))$ with a standard Brownian motion $\Bb$ on $[0,c]$ with $c = K^2 \int \big ( 1 \wedge x^4 \big ) \nu(dx)$. From \eqref{EquicontAbsch}, for each $\epsilon, \eta >0$ there exists some $\delta >0$ such that
\begin{eqnarray} \nonumber
\sup \limits_{n \in \N} \Prob^{\ast} \big ( \sup \limits_{d_{f_n}(u,v) < \delta} \left| \Bb_{f_n}(u) - \Bb_{f_n}(v) \right| > \epsilon \big ) \leq \Prob \big ( \sup \limits_{|u-v|^{1/2} < \delta} \left| \Bb (u) - \Bb(v) \right| > \epsilon \big ) < \eta,\\ \label{Equiabsch}
\end{eqnarray}
where $\Prob^{\ast}$ denotes outer probability, because for each $n \in \N$ the set on the left hand side is a subset of the set on the right hand side. But $d_{f_n}$ converges uniformly to $d_{f_0}$ by Lebesgue's convergence theorem under the given assumptions and therefore for each $\epsilon, \eta >0$ we have
$$\limsup \limits_{n \rightarrow \infty} \Prob^{\ast} \big ( \sup \limits_{d_{f_0}(u,v) < \delta /2} \left| \Bb_{f_n}(u) - \Bb_{f_n}(v) \right| > \epsilon \big ) < \eta$$
with $\delta>0$ from \eqref{Equiabsch}. Thus, $\Gb_{f_n}$ is asymptotically uniformly $d_{f_0}$-equicontinuous in probability. Furthermore, it is immediate to see that $(\R, d_{f_0})$ is totally bounded. Trivially, the marginals of $\Gb_{f_n}$ converge to $\Gb_{f_0}$, because these are centered multivariate normal distributions and their covariance functions converge again by Lebesgue's dominated convergence theorem. Therefore the desired result holds due to Theorem 1.5.4 and Theorem 1.5.7 in \cite{VanWel96}.
\qed

\medskip 

\textbf{Proof of Lemma \ref{step3}} For $\alpha > 0$ define the following empirical processes:
$$\tilde Y_{\rho^{\prime}_{\alpha}}^{(n)} (t) = \sqrt{n \Delta_n} \left\{ \frac{1}{n \Delta_n} \sum \limits_{i=1}^n g_t^{\prime 
(\alpha)}(\Deli L^{(n)}) - N_{\rho^{\prime}_{\alpha}}(t) \right\}.$$
For $n \rightarrow \infty$ these processes converge weakly in $\linfr$, that is
$$\tilde Y_{\rho^{\prime}_{\alpha}}^{(n)} \weak \Gb_{\rho^{\prime}_{\alpha}},$$
due to Proposition \ref{BiasAbschProp} and Proposition \ref{LevyCLTProp}. On the other hand we have weak convergence
$$\Gb_{\rho^{\prime}_{\alpha}} \weak 0$$
in $\linfr$ as $\alpha \rightarrow 0$, by Proposition \ref{GalKonvLem}. Therefore, by using the Portmanteau theorem (Theorem 1.3.4 in \cite{VanWel96}) twice, we obtain for arbitrary $\eta >0$:
\begin{align*}
\limsup \limits_{\alpha \rightarrow 0} \limsup \limits_{n \rightarrow \infty} \Prob \big ( \sup \limits_{t \in \R} \left| 
                    \tilde Y_{\rho^{\prime}_{\alpha}}^{(n)} (t) \right| \geq \eta \big ) \leq 
										\limsup \limits_{\alpha \rightarrow 0} \Prob \big ( \sup \limits_{t \in \R} \left| \Gb_{\rho^{\prime}_{\alpha}} (t)
										\right| \geq \eta \big ) =0.
\end{align*}
Thus it suffices to show
\begin{eqnarray*}
\limsup \limits_{n \rightarrow \infty} \Prob \big ( \sup \limits_{t \in \R} \left| V^{(n)}_{\alpha}(t) \right| > \eta \big ) = 0,
\end{eqnarray*}
for each $\eta >0$ and every $\alpha >0$ on a neighbourhood of $0$, where $V^{(n)}_{\alpha}$ denotes 
\begin{eqnarray}
\label{DifferDefEq}
V^{(n)}_{\alpha} (t) = \frac{1}{\sqrt{n \Delta_n}} \sum \limits_{i=1}^n \left\{ g_t^{\prime (\alpha)} (\Deli X) \Truniv - 
g_t^{\prime (\alpha)}(\Deli L^{(n)}) \right\}.
\end{eqnarray}
We will proceed similarly as in the proof of Lemma \ref{step1}. Therefore, we consider again the quantities defined in \eqref{unseqDefEq} and 
\eqref{SmandlajumpDef}, and with the same $\ell$. 

First of all let $\alpha >0$ be fixed for the following consideration. As we have seen prior to \eqref{BnsetsDef} the sets $B_n$ satisfy $\Prob(B_n) \rightarrow 1$. Furthermore, on the set $B_n$, and if $v_n \leq \alpha$, we have three mutually exclusive possibilities for $1 \leq i \leq n$:
\begin{enumerate}
\item[(i)] $\Deli N^n =0$. \\
      Then we have $\left| \Deli X \right| = \left| \Deli \tilde X^{\prime n} \right| \leq v_n /2$ and there is no jump larger than 
			$v_n$ on the interval $((i-1) \Delta_n, i \Delta_n]$. Thus $g_t^{\prime (\alpha)} (\Deli X) \Truniv = 0 = g_t^{\prime (\alpha)}(\Deli 
			L^{(n)})$ holds for all $t \in \R$ and the $i$-th summand in \eqref{DifferDefEq} vanishes.
\item[(ii)] $\Deli N^n =1$ and $\Deli \tibigj \neq 0$, but $\Deli \tibigjeial =0$. \\
      So the only jump in $((i-1) \Delta_n, i \Delta_n]$ larger than $v_n$ is also larger than $2 \alpha$. Because 
			$\left| \Deli \tilde X^{\prime n} \right| \leq v_n /2 \leq \alpha/2$ holds, we have 
			$\left| \Deli X \right| \geq \alpha$, and consequently $g_t^{\prime (\alpha)} (\Deli X) \Truniv = 0 = 
			g_t^{\prime (\alpha)}(\Deli L^{(n)})$ using the definition of $g_t^{\prime (\alpha)}$.
\item[(iii)] $\Deli N^n =1$ and $\Deli \tibigj = \Deli \tibigjeial \neq 0$. \\
      Here we can write 
			$$\Deli X = \Deli \tilde X^{\prime n} + \Deli \tibigjeial$$
			and
			$$g_t^{\prime (\alpha)}(\Deli L^{(n)}) = \rho_{\alpha}^{\prime}(\Deli \tibigjeial) 
			\mathtt 1_{\lbrace \left| \Deli \tibigjeial \right| > v_n \rbrace} \Indit(\Deli \tibigjeial).$$
\end{enumerate}

Therefore on $B_n$, and as soon as $v_n \leq \alpha$, we have with $V^{(n)}_{\alpha}$ as in \eqref{DifferDefEq}:
\begin{align*}
\sup \limits_{t \in \R} & \left| V^{(n)}_{\alpha}(t) \right| \leq \frac{1}{\sqrt{n \Delta_n}} \sup \limits_{t \in \R} \Big| 
      \sum \limits_{i=1}^n \bigg\{ \rho^{\prime}_{\alpha}(\Deli X) \mathtt 1_{\lbrace | \Deli X| > v_n \rbrace} \Indit(\Deli X) - \\
     &- \rho^{\prime}_{\alpha}(\Deli \tibigjeial)	\mathtt 1_{\lbrace | \Deli \tibigjeial | > v_n \rbrace} \Indit(\Deli \tibigjeial) \bigg\}
		     	\mathtt 1_{\lbrace | \Deli \tilde X^{\prime n} | \leq v_n/2 \rbrace} \Big| \\
\leq & \hat C_n(\alpha) + \hat D_n (\alpha) + \hat E_n(\alpha),
\end{align*}
where $\Deli X = \Deli \tilde X^{\prime n} + \Deli \tibigjeial$ can be substituted in the first line and with
\begin{multline*}
\hat C_n(\alpha) = \frac{K}{\sqrt{n \Delta_n}} \sup \limits_{t \in \R} \sum \limits_{i=1}^n \left| \Indit(\Deli \tilde X^{\prime n} + 
                                \Deli \tibigjeial) - \Indit(\Deli \tibigjeial) \right|  \times \\
																\times \mathtt 1_{\lbrace | \Deli \tibigjeial | > \Delta_n^{\ovv} \rbrace} 
																\mathtt 1_{\lbrace | \Deli \tilde X^{\prime n} | \leq v_n/2 \rbrace},
\end{multline*}
\begin{multline*}
\hat D_n(\alpha) = \frac{1}{\sqrt{n \Delta_n}} \sum \limits_{i=1}^n \big| \rho^{\prime}_{\alpha} (\Deli \tilde X^{\prime n} + 
                            \Deli \tibigjeial) \mathtt 1_{\lbrace |  \Deli \tilde X^{\prime n} + \Deli \tibigjeial| > v_n \rbrace} - \\
														- \rho^{\prime}_{\alpha}(\Deli \tibigjeial) \mathtt 1_{\lbrace | \Deli \tibigjeial| > v_n \rbrace} \big| 
														  \mathtt 1_{\lbrace | \Deli \tibigjeial | > \Delta_n^{\ovv} \rbrace} 
														  \mathtt 1_{\lbrace | \Deli \tilde X^{\prime n} | \leq v_n/2 \rbrace},
\end{multline*}
\begin{multline*}
\hat E_n(\alpha) = \frac{1}{\sqrt{n \Delta_n}} \sup \limits_{t \in \R} \Big| \sum \limits_{i=1}^n 
         \bigg\{ \rho^{\prime}_{\alpha}(\Deli X) \mathtt 1_{\lbrace | \Deli X| > v_n \rbrace} \Indit(\Deli X) - 
				\rho^{\prime}_{\alpha}(\Deli \tibigjeial) \times	\\
				\times \mathtt 1_{\lbrace | \Deli \tibigjeial | > v_n \rbrace} \Indit(\Deli \tibigjeial) \bigg\}
		     	\mathtt 1_{\lbrace | \Deli \tilde X^{\prime n} | \leq v_n/2 \rbrace}  
					\mathtt 1_{\lbrace | \Deli \tibigjeial | \leq \Delta_n^{\ovv} \rbrace}  \Big|
\end{multline*}
where $K>0$ denotes an upper bound for $\rho$  and $\ovv >0$ is the constant from Condition \ref{Cond1}\eqref{FiLevyDistCond}. In the first line of the definition of $\hat E_n (\alpha)$ we will later substitute
$\Deli X = \Deli \tilde X^{\prime n} + \Deli \tibigjeial$ again.

Now it suffices to show for each fixed $\alpha, \eta >0$: 
\begin{align}
\label{a} &\limsup_{n \rightarrow \infty} \Prob( \hat C_n(\alpha) > \eta) = 0,\\
\label{b} &\limsup_{n \rightarrow \infty} \Prob( \hat D_n(\alpha) > \eta) =0, \\
\label{c} &\limsup_{n \rightarrow \infty} \Prob( \hat E_n(\alpha) > \eta) = 0.
\end{align}
Concerning \eqref{a}, assume without loss of generality that $2 \alpha < \alpha_0$ with the constant $\alpha_0$ of Condition 
\ref{Cond1}\eqref{FiLevyDistCond}. Similar to \eqref{RijnalDefEq} we define for $1 \leq i,j \leq n$ with $i \neq j$ and the constants $\ovv < \ovr$ of Condition \ref{Cond1}:
$$S_{i,j}^{(n)}(\alpha) = \left\{ \left| \Deli \tibigjeial - \Delj \tibigjeial \right| \leq \Delta_n^{\ovr} \right\} \cap
\left\{ \left| \Deli \tibigjeial \right| > \Delta_n^{\ovv} \right\} \cap B_n.$$
The same considerations as for \eqref{PMRVorAbsch} and \eqref{PMengenRAbsch} yield
\begin{eqnarray*}
\Prob(S_{i,j}^{(n)}(\alpha)) &\leq& \Delta_n^2 \int \int \mathtt 1_{\lbrace | x-y| \leq \Delta_n^{\ovr} \rbrace} \mathtt 1_{\lbrace 
\Delta_n^{\ovv} /2 < |x| \leq \alpha_0 \rbrace} \mathtt 1_{\lbrace \Delta_n^{\ovv} /2 < |y| \leq \alpha_0 \rbrace} \nu(dx) \nu(dy) 
         \nonumber \\
         &\leq& K \Delta_n^{2+q},
\end{eqnarray*}
for $n$ large enough, because of Condition \ref{Cond1}\eqref{FiLevyDistCond} and $\ovv < \ovr$.

Just as in \eqref{J1ijnalDefEq} we define
$$(J_n^{(2)} (\alpha))^C = \bigcup \limits_{\stackrel{i,j =1}{i \neq j}}^n S_{i,j}^{(n)} (\alpha).$$
Therefore $\Prob (J_n^{(2)}(\alpha)) \rightarrow 1$ holds, and on $J_n^{(2)}(\alpha) \cap B_n$ we have
$$\left| \Deli \tibigjeial - \Delj \tibigjeial \right| > \Delta_n^{\ovr}$$
for each two indices $i,j$ for which the summand in $\hat C_n(\alpha)$ does not vanish. So for each $t \in \R$ at most $v_n / \Delta_n^{\ovr}$ summands in $\hat C_n(\alpha)$ can be equal to $1$, and we have $\hat C_n(\alpha) \rightarrow 0$ pointwise on $J^{(2)}_n(\alpha) \cap B_n$ by Condition \ref{Cond1}\eqref{ObsSchCond7}.

Now we discuss \eqref{b}. Because of \eqref{ralralprdiabs} there is a constant $K>0$ such that 
\begin{multline*}
\left| \rho_{\alpha}^{\prime} (x+z) \mathtt 1_{\lbrace |x+z| > v \rbrace} - \rho_{\alpha}^{\prime}(x) \mathtt 1_{\lbrace |x| > v \rbrace} 
\right| \mathtt 1_{\lbrace |z| \leq v/2 \rbrace} \\
\leq K ( |x|^{p-1} |z| \mathtt 1_{\lbrace |z| \leq v/2 \rbrace} + |x|^p \mathtt 1_{\lbrace |x| \leq 2v \rbrace}).
\end{multline*}
Therefore it suffices to verify
\begin{align} \label{A}
\lim_{n \rightarrow \infty} c_n(\alpha) = 0
\end{align}
      with
			$$c_n(\alpha) = \frac{1}{\sqrt{n \Delta_n}} \sum \limits_{i=1}^n \Eb \left\{ \left| \Deli \tibigjeial \right|^p 
			\mathtt 1_{\lbrace \left| \Deli \tibigjeial \right| \leq 2 v_n \rbrace} \right\}$$
			and
\begin{align} \label{B}
\lim_{n \rightarrow \infty} d_n(\alpha) =0
\end{align}
      for 
			$$d_n(\alpha) = \frac{v_n}{2 \sqrt{n \Delta_n}} \sum \limits_{i=1}^n \Eb \left| \Deli \tibigjeial \right|^{p-1}.$$

In order to show \eqref{A} let $\alpha >0$ be fixed. Define further for $y \in \R_+$
$$\widehat \delta_{n, \alpha}(y) = \int |x|^y \mathtt 1_{\lbrace u_n < |x| \leq 2 \alpha \rbrace} \nu(dx).$$
Obviously, for each small $\delta >0$ there is a constant $K(\delta)>0$ with
$$|x|^p \mathtt 1_{\lbrace |x| \leq 2 v_n \rbrace} \leq K(\delta) v_n^{p-((r+\delta) \wedge 1)} |x|^{(r+ \delta) \wedge 1}$$
and 
$$\widehat \delta_{n, \alpha}((r+ \delta) \wedge 1) \leq K(\delta) u_n^{-((r+ \delta)-1)_+}$$
by the assumptions on the L\'evy measure in Condition \ref{Cond1}. Thus Lemma 2.1.7(b) in \cite{JacPro12} shows
\begin{align*}
\Eb &\left\{ \left| \Deli \tibigjeial \right|^p \mathtt 1_{\lbrace \left| \Deli \tibigjeial \right| \leq 2 v_n \rbrace} \right\} \\
     &\leq K(\delta) v_n^{p-((r+\delta) \wedge 1)} \Eb \left\{ \left| \Deli \tibigjeial \right|^{(r+\delta) \wedge 1} \right\} \\
        &\leq K(\delta) v_n^{p-((r+\delta) \wedge 1)} \Delta_n \widehat \delta_{n, \alpha}((r + \delta) \wedge 1)  \\
				&\leq 
				K(\delta) \Delta_n^{1+ (p - ((r+ \delta) \wedge 1) - \ell ((r+\delta)-1)_+) \ovw}
\end{align*}
for all $1 \leq i \leq n$, and consequently for $\delta$ small enough
$$c_n(\alpha) \leq \begin{cases}
                   K(\delta) \sqrt{n \Delta_n^{1+ 2(p-r-\delta) \ovw}} = o \big (\sqrt{n \Delta_n^{1+2 \ovw}} \big ) \rightarrow 0, \quad \text{ if } r < 1 \\
									 K(\delta) \sqrt{n \Delta_n^2} \rightarrow 0, \quad \text{ if } r \geq 1
									\end{cases}$$
by Condition \ref{Cond1}\eqref{ObsSchCond4}. The second case holds because of $(p-1- \ell (r-1)) \ovw >\frac{1}{2}$ from \eqref{elleqn}.

In order to establish \eqref{B}, note from the assumptions on $\nu$ in Condition \ref{Cond1} and from $p-1 >r$ that we have for each small $\delta >0$ and $n \in \N$ large enough:
$$\widehat \delta_{n,\alpha} (1) \leq K(\delta) u_n^{-(r+\delta-1)_+} \quad \text{ and } \quad \widehat \delta_{n,\alpha} (p-1) \leq K(\delta).$$
Furthermore, Lemma 2.1.7 (b) in \cite{JacPro12} gives
$$\Eb \left| \Deli \tibigjeial \right|^{p-1} \leq K(\delta) ( \Delta_n \widehat \delta_{n, \alpha}(p-1) + \Delta_n^{(p-1) \vee 1} 
(\widehat \delta_{n, \alpha}(1))^{(p-1) \vee 1}),$$
for each $1 \leq i \leq n$. Therefore we obtain from Condition \ref{Cond1}\eqref{ObsSchCond4} and $p >2$, for $\delta$ small enough:
\begin{align*}
d_n(\alpha) &\leq K(\delta) \left\{ \frac{1}{\sqrt{n \Delta_n}} n \Delta_n v_n + \frac{1}{\sqrt{n \Delta_n}} n \Delta_n^{p-1} v_n 
              u_n^{-(r+\delta -1)_+ (p-1)} \right\} \\
						&\leq K(\delta) \left\{ \sqrt{n \Delta_n^{1+ 2 \ovw}} + \sqrt{ n \Delta_n^{2(p-1) - 2(p-1)(r+\delta-1)_+ \ell \ovw 
						      -1 + 2 \ovw}} \right\} \\
						&\leq \begin{cases}
						      K(\delta) \sqrt{n \Delta_n^{1+ 2 \ovw}} \rightarrow 0, \quad \text{ if } r < 1, \\
									K(\delta) \left\{ \sqrt{n \Delta_n^{1+ 2 \ovw}} + \sqrt{n \Delta_n^{2+2 \ovw}} \right\} \rightarrow 0, \quad \text{ if } r \geq 1,
									\end{cases}
\end{align*}
where the last case follows, because when $r \geq 1$ we have $p > 1+3r \geq 4$ and thus
$$2(p-1)  - 2(p-1)(r+ \delta -1) \ell \ovw -1 > 2(p-1)[1- \frac{1}{2}] -1 = p-1-1 >2,$$
for $0<\delta <1$ by the choice of $\ell < \frac{1}{2r \ovw}$. 

Concerning \eqref{c}, let $\alpha >0$ be fixed. Due to the indicator functions in the definition of $\hat E_n(\alpha)$ we have for $n \in \N$ large enough:
$$| \Deli \tibigjeial | \leq \Delta_n^{\ovv} \quad \text{ and } \quad |\Deli \tilde X^{\prime n} + \Deli \tibigjeial | \leq 2 \Delta_n^{\ovv},$$
because of $v_n = \gamma \Delta_n^{\ovw}$ and $\ovv < \ovw$ from Condition \ref{Cond1}\eqref{FiLevyDistCond}. Therefore the $i$-th summand in the definition of $\hat E_n(\alpha)$ is bounded by $K \Delta_n^{p \ovv}$ due to the behaviour of $\rho$ near zero. Thus
$$\hat E_n(\alpha) = O \bigg ( \sqrt{n \Delta_n^{2p \ovv -1}} \bigg ) \rightarrow 0,$$
for each $\omega \in \Omega$ by Condition \ref{Cond1}\eqref{ObsSchCond5}. \qed

\medskip

\textbf{Proof of Theorem \ref{ConvThm}} In order to establish weak convergence we use Theorem 1.12.2 in \cite{VanWel96}. With  $\Eb^{\ast}$ denoting outer expectation it is sufficient to prove
$$\Eb^{\ast} h(G_{\rho}^{(n)}) \rightarrow \Eb h(\Gb_{\rho})$$
for each bounded Lipschitz function $h \in \text{BL}_1(\linfr)$, that is $\| h \|_{\infty} \leq 1$ and $h$ is Lipschitz continuous with a Lipschitz constant bounded by $1$. Here, we use that the tight process $\Gb_{\rho}$ is also separable. 

Thus, let $h \in \text{BL}_1(\linfr)$ and $\delta >0$. Then we choose $\alpha >0$ with 
\begin{eqnarray}
\label{ProbAbschEq}
\limsup \limits_{n \rightarrow \infty} \Prob (\sup \limits_{t \in \R} |G_{\rho,n}^{\prime (\alpha)}(t)| > \delta/6) \leq \delta/13
\end{eqnarray}
and
\begin{eqnarray}
\label{ErwwHPrAbschEq}
\left| \Eb h(\Gb_{\rho_{\alpha}}) - \Eb h(\Gb_{\rho}) \right| \leq \delta/3.
\end{eqnarray}
\eqref{ProbAbschEq} is possible using Lemma \ref{step3}, and Lemma \ref{step2} allows \eqref{ErwwHPrAbschEq}. For this $\alpha >0$ choose an $N \in \N$ with 
\begin{eqnarray*}
\left| \Eb^{\ast} h(G_{\rho,n}^{(\alpha)}) - \Eb h(\Gb_{\rho_{\alpha}}) \right| \leq \delta/3,
\end{eqnarray*}
for $n \geq N$. This is possible due to Lemma \ref{step1}. Now, because of the previous inequalities and the Lipschitz property of $h$, we have for $n \in \N$ large enough:
\begin{multline*}
\left| \Eb^{\ast} h(G_{\rho}^{(n)}) - \Eb h(\Gb_{\rho}) \right| \leq \\
\leq \Eb^{\ast} \left| h(G_{\rho}^{(n)}) - h(G_{\rho,n}^{(\alpha)}) \right| + \left| \Eb^{\ast} h(G_{\rho,n}^{(\alpha)}) - \Eb h(\Gb_{\rho_{\alpha}}) \right| + \left| \Eb h(\Gb_{\rho_{\alpha}}) - \Eb h(\Gb_{\rho}) \right| \leq \delta.
\end{multline*}
\qed

\bigskip
\noindent
\textbf{Acknowledgements.}
This work has been supported by the Collaborative Research Center ``Statistical modeling of nonlinear dynamic processes'' (SFB 823, Teilprojekt A1) 
of the German Research Foundation (DFG) which is gratefully acknowledged.

\bibliographystyle{chicago}
\bibliography{bibliography}
\end{document}